\documentclass[a4paper,twoside,10pt]{article}


\newcommand{\mytitle}{Reconstructing the Optical Parameters of a Layered Medium with Optical Coherence Elastography}
\title{\mytitle}

\date{}
\author{Peter Elbau$^1$\\{\footnotesize\href{mailto:peter.elbau@univie.ac.at}{peter.elbau@univie.ac.at}}
\and Leonidas Mindrinos$^1$\\{\footnotesize\href{mailto:leonidas.mindrinos@univie.ac.at}{leonidas.mindrinos@univie.ac.at}}
\and Leopold Veselka$^1$\\{\footnotesize\href{mailto:lepold.veselka@univie.ac.at}{leopold.veselka@univie.ac.at}}}

\usepackage[utf8]{inputenc}

\usepackage{microtype}

\usepackage{amsmath}

\usepackage{longtable}

\usepackage{amssymb}

\usepackage{mathrsfs}

\usepackage{mathtools}

\usepackage{siunitx}

\usepackage[a4paper,centering,bindingoffset=0cm,marginpar=0cm]{geometry}

\usepackage{titlesec}
\usepackage[font=footnotesize,format=plain,labelfont=sc,textfont=sl,width=0.75\textwidth,labelsep=period]{caption}
\titleformat{\section}{\filcenter\sc\large}{\thesection.\;}{0em}{}
\titleformat{\subsection}[runin]{\bf}{\thesubsection.\;}{0em}{}[.]

\newpagestyle{headers}%
{%
	\headrule
	\sethead%
		[\thepage]%
		[\footnotesize\sc P.~Elbau, L.~Mindrinos, and L.~Veselka]%
		[]%
		{}%
		{\footnotesize\sc{Reconstructing the Optical Parameters with OCE}}%
		{\thepage}
}
\pagestyle{headers}

\usepackage[backend=bibtex,maxnames=15]{biblatex}
\addbibresource{ElbMinVes20.bib}

%
%

\usepackage[pdftex,colorlinks=true,linkcolor=blue,citecolor=green,urlcolor=blue,bookmarks=true,bookmarksnumbered=true]{hyperref}


\usepackage[hyperref,amsmath,thmmarks]{ntheorem}
\usepackage{aliascnt}
\theoremstyle{break}
\theoremseparator{.}
\theoremindent=1em
\theoremheaderfont{\kern-1em\normalfont\bfseries}
\newtheorem{lemma}{Lemma}[section]

\newaliascnt{proposition}{lemma}

\aliascntresetthe{proposition}

\newaliascnt{corollary}{lemma}

\aliascntresetthe{corollary}

\newaliascnt{assumptions}{lemma}

\aliascntresetthe{assumptions}

\newaliascnt{invpro}{lemma}

\aliascntresetthe{invpro}

\theorembodyfont{\normalfont}
\newaliascnt{definition}{lemma}
\newtheorem{definition}[definition]{Definition}
\aliascntresetthe{definition}

\theorembodyfont{\normalfont\itshape}
\theoremseparator{:}
\theoremheaderfont{\kern-1em\normalfont\itshape}
\newaliascnt{problem}{lemma}
\newtheorem{problem}[problem]{Problem}
\aliascntresetthe{problem}

\theorembodyfont{\normalfont}
\newaliascnt{example}{lemma}

\aliascntresetthe{example}

\theorembodyfont{\normalfont}
\newaliascnt{convention}{lemma}

\aliascntresetthe{convention}

\theorembodyfont{\normalfont\itshape}
\theoremseparator{:}
\theoremheaderfont{\kern-1em\normalfont\itshape}
\newaliascnt{remark}{lemma}
\newtheorem{remark}[remark]{Remark}
\aliascntresetthe{remark}

\theoremstyle{nonumberplain}
\theorembodyfont{\normalfont}
\newtheorem{proof}{Proof}

    \usepackage[ntheorem]{empheq}

\usepackage{bbm}
\usepackage{bm}
\newcommand{\cchi}{\bm\chi}

\newcommand{\R}{\mathbbm{R}}
\newcommand{\Z}{\mathbbm{Z}}
\newcommand{\C}{\mathbbm{C}}
\newcommand{\Q}{\mathds Q}

\renewcommand{\H}{\mathds H}



\postdisplaypenalty= 1000
\widowpenalty = 1000
\clubpenalty = 1000
\displaywidowpenalty = 1000
\parskip1ex
\parindent0pt


\newcommand{\e}{\mathrm e}
\renewcommand{\i}{\mathrm i}
\renewcommand{\d}{\,\mathrm d}

\let\RE\Re
\let\Re=\undefined
\DeclareMathOperator{\Re}{\RE e}
\let\IM\Im
\let\Im=\undefined
\DeclareMathOperator{\Im}{\IM m}

\DeclareMathOperator{\curl}{curl}
\let\div=\undefined
\DeclareMathOperator{\div}{div}
\DeclareMathOperator{\grad}{grad}
\DeclareMathOperator{\supp}{supp}

\usepackage{dsfont}




\usepackage{caption}
\usepackage{subcaption}

\usepackage[ ]{algorithm2e}

\SetCommentSty{mycommfont}


\begin{document}

\maketitle
\hspace*{2.5em}
\parbox[t]{0.9\textwidth}{\footnotesize
\hspace*{-1ex}$^1$Faculty of Mathematics, University of Vienna, Oskar-Morgenstern-Platz 1, 1090 Vienna, Austria}

\vspace*{2em}

\begin{abstract}
In this work we consider the inverse problem of reconstructing the optical properties of a layered medium from an elastography measurement where optical coherence tomography is used as the imaging method. We hereby model the sample as a linear dielectric medium so that the imaging parameter is given by its electric susceptibility, which is a frequency- and depth-dependent parameter. Additionally to the layered structure (assumed to be valid at least in the small illuminated region), we allow for small scatterers which we consider to be randomly distributed, a situation which seems more realistic compared to purely homogeneous layers. We then show that a unique reconstruction of the susceptibility of the medium (after averaging over the small scatterers) can be achieved from optical coherence tomography measurements for different compression states of the medium.

\medskip
\noindent \textbf{Keywords:} Optical Coherence Tomography, Optical Coherence Elastography, Inverse Problem, Parameter Identification

\medskip
\noindent \textbf{AMS:} 65J22, 65M32, 78A46
\end{abstract}

\section{Introduction}\label{sec1}

Optical Coherence Tomography is an imaging modality producing high resolution images of biological tissues. It measures the magnitude of the back-scattered light of a focused laser illumination from a sample as a function of depth and provides cross-sectional or volumetric data by performing a series of multiple axial scans at different positions. Initially, it used to operate in time where a movable mirror was giving the depth information. Later on, frequency-domain optical coherence tomography was introduced where the detector is replaced by a spectrometer and no mechanical movement is needed. We refer to \cite{Bre06, DreFuj15} for an overview of the physics of the experiment and to \cite{ElbMinSch15} for a mathematical description of the problem.

Only lately, the inverse problems arising in optical coherence tomography have attracted the interest from the mathematical community, see, for example, \cite{AmmRomShi17, ElbMinSch18a, SanAraBarCarCor15, TriCar19}. For many years, the proposed and commonly used reconstruction method was just the inverse Fourier transform. This approach is valid only if the properties of the medium are assumed to be frequency-independent in the spectrum of the light source. However, the less assumptions one takes, the more mathematically interesting but also difficult the problem becomes. 

The main assumption, we want to make is that the medium can be (at least locally in the region where the laser beam illuminates the object) well described by a layered structure. Since there are in real measurement images typically multiple small particles visible inside these layers, we will additionally include small, randomly distributed scatterers into the model and calculate the averaged contribution of these particles to the measured fields.

To obtain a reconstruction of the medium, that is, of its electric susceptibility, we consider an elastography setup where optical coherence tomography is used as the imaging system. This so-called optical coherence elastography is done by recording optical coherence tomography data for different compression states of the medium, see \cite{AmmBreMil15, DreHubKra19, NahBauRou13, SunStaYan11} for some recent works dealing with this interesting problem.

Under the assumption that the sample can be described as a linear elastic medium, we show that these measurements can be used to achieve a unique reconstruction of the electric susceptibility of the layered medium.

The paper is organised as follows: In \autoref{Mod_se} we review the main equations describing mathematically how the data in optical coherence tomography is collected and their relation to the optical properties of the medium. In \autoref{DD_se}, we show that the calculation of the back-scattered field can be decomposed into the corresponding subproblems for the single layers, for which we derive the resulting formulæ in \autoref{Scat_se}. Finally, we derive in \autoref{OCE_se} that from the measurements at different compression states a unique reconstruction of the susceptibility becomes feasible.

\section{Modelling the optical coherence tomography measurement}\label{Mod_se}

We model the sample by a dispersive, isotropic, non-magnetic, linear dielectric medium characterised by its scalar electric susceptibility. To include randomly distributed scatterers in the model, we introduce the susceptibility as a random variable; so let $(\mathcal{X},\mathcal A,P)$ be a probability space and write
\[ 
\chi:\mathcal{X}\times\R\times\R^3\to\R,\;(\sigma,t,x)\mapsto\chi_\sigma(t,x) 
\]
for the electric susceptibility of the medium in the state $\sigma$. To have a causal model, we require that $\chi_\sigma(t,x)=0$ for all $t<0$.

The object (in a certain realisation state $\sigma\in\mathcal{X}$) is then probed with a laser beam, described by an incident electric field $E^{(0)}:\R\times\R^3\to\R^3$.

\begin{definition}\label{Mod_Incident_de}
We call $E^{(0)}:\R\times\R^3\to\R^3$ an incident wave (for a susceptibility $\chi:\R\times\R^3\to\R)$ in the homogeneous background $\chi_0:\R\to\R$ if it is a solution of Maxwell's equations for $\chi_0$, that is,
\[ \Delta E^{(0)}(t,x)=\frac1{c^2}\partial_{tt}D^{(0)}(t,x), \]
where $c$ denotes the speed of light and 
\[ D^{(0)}(t,x)=E^{(0)}(t,x)+\int_\R\chi_0(\tau)E^{(0)}(t-\tau,x)\d\tau,\]
and $E^{(0)}$ does not interact with the inhomogeneity for negative times, meaning that
\begin{equation}\label{Mod_Incident_Disj_eq}
\chi(\tau,x)E^{(0)}(t,x) = 0\text{ for all }\tau\in\R,\;t\in(-\infty,0),\;x\in\Omega,
\end{equation}
with $\Omega=\{x\in\R^3\mid\chi(\cdot,x) \ne \chi_0(\cdot,x)\}$.
\end{definition}

We then measure the resulting electric field $E_\sigma:\R\times\R^3\to\R^3$ induced by the incident field $E^{(0)}$ in the presence of the dielectric medium described by the susceptibility $\chi_\sigma$.

\begin{definition}
Let $\chi:\R\times\R^3\to\R$ be a susceptibility and $E^{(0)}:\R\times\R^3\to\R^3$ be an incident wave for $\chi$. Then, we call $E$ the electric field induced by $E^{(0)}$ in the presence of $\chi$ if $E$ is a solution of the equation system
\begin{alignat}{2}
\curl\curl E(t,x)+\frac1{c^2}\partial_{tt}D(t,x) &= 0, &&\text{ for all }t\in\R,\;x\in\R^3, \label{Mod_Wave_eq} \\
E(t,x)-E^{(0)}(t,x)&=0, &&\text{ for all }t\in(-\infty,0),\;x\in\R^3, \label{Mod_Wave_Initial_eq}
\end{alignat}
with the electric displacement field $D:\R\times\R^3\to\R^3$ being related to the electric field via
\[ D(t,x) = E(t,x)+\int_{\R}\chi(\tau,x)E(t-\tau,x)\d\tau. \]
\end{definition}
\begin{remark}
The fact that $E^{(0)}$ does not interact with the object before time $t=0$, see \eqref{Mod_Incident_Disj_eq}, guarantees that $E^{(0)}$ is a solution of \eqref{Mod_Wave_eq} and thus the initial condition in \eqref{Mod_Wave_Initial_eq} is compatible with \eqref{Mod_Wave_eq}.
\end{remark}

Equation \eqref{Mod_Wave_eq} is more conveniently written in Fourier space, where we use the convention
\[ \mathcal F[f](k) = \frac1{(2\pi)^{\frac n2}}\int_{\R^n}f(x)\e^{-\i\left<k,x\right>}\d x \]
for the Fourier transform of a function $f:\R^n\to\R$. For convenience, we also use the shorter notation
\[ \check F(\omega,x) = \sqrt{2\pi}\,\mathcal F^{-1}[t\mapsto F(t,x)](\omega) = \int_{\R} F(t,x)\e^{\i\omega t}\d t \]
for this rescaled inverse Fourier transformation of a function of the form $F:\R\times\R^m\to\R^n$ with respect to the time variable.

\begin{lemma}\label{Mod_Fourier_th}
Let $\chi:\R\times\R^3\to\R$ be a susceptibility, $E^{(0)}:\R\times\R^3\to\R^3$ be an incident wave for $\chi$, and $E$ be the corresponding electric field. Then, $\check E$ solves (uniquely) the vector Helmholtz equation 
\begin{equation}\label{Mod_Fourier_Helm_eq}
\curl\curl\check E(\omega,x)-\frac{\omega^2}{c^2}(1+\check\chi(\omega,x))\check E(\omega,x) = 0\text{ for all }\omega\in\R,\;x\in\R^3,
\end{equation}
with the constraint
\begin{equation}\label{Mod_Fourier_Hardy_eq}
\check E \in \mathcal H(\check E^{(0)}),
\end{equation}
where $\mathcal H(\check E^{(0)})$ is the space of all functions $F:\R\times\R^3\to\R^3$ so that the map $\omega\mapsto (F-\check E^{(0)}) (\omega,x)$ can be holomorphically extended to the space $\H\times\R^3$, where $\H=\{z\in\C\mid\Im z>0\}$ denotes the upper half complex plane, and the extension fulfils
\[ \sup_{\lambda>0}\int_{\R}|(F-\check E^{(0)})(\omega+\i\lambda,x)|^2\d\omega < \infty \]
for every $x\in\R^3$.
\end{lemma}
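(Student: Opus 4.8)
The plan is to apply the rescaled transform $\check{\cdot}$ in the time variable directly to the defining equations \eqref{Mod_Wave_eq}--\eqref{Mod_Wave_Initial_eq} and to read off the Helmholtz equation and the Hardy space constraint from the two conditions separately. First I would transform \eqref{Mod_Wave_eq}. Since $\curl\curl$ acts only on the spatial variables, it commutes with the transform, so $\check{(\curl\curl E)}=\curl\curl\check E$. The second time derivative turns into multiplication by $-\omega^2$, that is, $\check{(\partial_{tt}D)}(\omega,x)=-\omega^2\check D(\omega,x)$, which follows from integrating by parts twice, the boundary terms vanishing under suitable decay assumptions on the fields. For the displacement field, the temporal convolution $\int_\R\chi(\tau,x)E(t-\tau,x)\d\tau$ is mapped by the convolution theorem to the product $\check\chi(\omega,x)\check E(\omega,x)$, so that $\check D(\omega,x)=(1+\check\chi(\omega,x))\check E(\omega,x)$. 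Substituting these three identities into the transform of \eqref{Mod_Wave_eq} yields exactly \eqref{Mod_Fourier_Helm_eq}.

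It then remains to translate the initial condition \eqref{Mod_Wave_Initial_eq} into the constraint \eqref{Mod_Fourier_Hardy_eq}. Condition \eqref{Mod_Wave_Initial_eq} states that $E-E^{(0)}$ vanishes for all $t<0$, so its transform reduces to $\check{(E-E^{(0)})}(\omega,x)=\int_0^\infty (E-E^{(0)})(t,x)\e^{\i\omega t}\d t$. Here I would invoke the Paley--Wiener characterisation of the Hardy space of the upper half plane: for a function supported in $[0,\infty)$ in time, replacing $\omega$ by $\omega+\i\lambda$ with $\lambda>0$ introduces the exponentially decaying factor $\e^{-\lambda t}$, so the integral extends holomorphically to $\H\times\R^3$, and the isometry underlying Paley--Wiener gives $\sup_{\lambda>0}\int_\R|\check{(E-E^{(0)})}(\omega+\i\lambda,x)|^2\d\omega=2\pi\int_0^\infty|(E-E^{(0)})(t,x)|^2\d t<\infty$ for each fixed $x$, provided $E-E^{(0)}$ is square integrable in time. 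This establishes $\check E-\check E^{(0)}\in\mathcal H(0)$, i.e.\ \eqref{Mod_Fourier_Hardy_eq}.

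For the uniqueness assertion I would argue in the reverse direction, and this is where I expect the main difficulty to lie, since the Hardy constraint couples all frequencies $\omega$ and cannot be exploited pointwise in $\omega$ the way the Helmholtz equation can. Suppose $\check E_1$ and $\check E_2$ both solve \eqref{Mod_Fourier_Helm_eq} and satisfy \eqref{Mod_Fourier_Hardy_eq}. Their difference $W=\check E_1-\check E_2$ then solves the homogeneous Helmholtz equation and lies in $\mathcal H(0)$, since the reference term $\check E^{(0)}$ cancels. By the converse part of the Paley--Wiener theorem, $W=\check w$ for some field $w$ with $w(t,x)=0$ for $t<0$, and transforming the homogeneous Helmholtz equation back to the time variable shows that $w$ solves the homogeneous counterpart of \eqref{Mod_Wave_eq} with vanishing data for negative times. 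The remaining, genuinely hyperbolic, step is to conclude $w\equiv 0$ from uniqueness of this initial value problem, which I would obtain from an energy estimate that exploits the causality of $\chi$ so that the convolution term does not destroy the positivity of the energy, together with finite speed of propagation. This forces $W\equiv 0$ and hence $\check E_1=\check E_2$.
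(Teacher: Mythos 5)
Your first two paragraphs are, in expanded form, exactly the paper's own proof: the paper obtains \eqref{Mod_Fourier_Helm_eq} by applying the Fourier transform to \eqref{Mod_Wave_eq} (convolution becoming a product, $\partial_{tt}$ becoming $-\omega^2$), and it invokes the Paley--Wiener theorem to identify \eqref{Mod_Fourier_Hardy_eq} with the support condition \eqref{Mod_Wave_Initial_eq}. The only divergence is your handling of uniqueness, and there you do more work than is needed. The paper uses that Paley--Wiener is an \emph{equivalence}: $F$ satisfies \eqref{Mod_Fourier_Hardy_eq} if and only if its inverse transform differs from $E^{(0)}$ by a field supported in $[0,\infty)$ in time, so the frequency-domain problem \eqref{Mod_Fourier_Helm_eq}--\eqref{Mod_Fourier_Hardy_eq} is a bijective reformulation of the time-domain problem \eqref{Mod_Wave_eq}--\eqref{Mod_Wave_Initial_eq}, and the ``(uniquely)'' is simply inherited from the time-domain problem, whose well-posedness the paper presupposes (it speaks of \emph{the} corresponding electric field and never proves hyperbolic uniqueness; compare also \autoref{DD_Uniq_th}, where the analogous uniqueness statement is explicitly taken as an assumption rather than proved). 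Your argument reproduces precisely this reduction --- the converse Paley--Wiener step applied to $W=\check E_1-\check E_2$, giving a field $w$ that solves \eqref{Mod_Wave_eq} and vanishes for $t<0$ --- and that is where you could, and should, stop: such a $w$ is the field induced by the zero incident wave, hence zero by the same standing convention the paper relies on. Your additional plan to establish that time-domain uniqueness via an energy estimate is the one genuinely incomplete piece of the proposal: the memory term $\partial_{tt}\int_\R\chi(\tau,x)\,E(t-\tau,x)\d\tau$ does not obviously preserve positivity of the usual energy, and making a Gronwall argument work requires regularity and structural assumptions on $\chi$ that neither you nor the paper states. So the proposal is correct and follows the paper's route wherever the paper actually gives an argument; the extra hyperbolic step should either be dropped in favour of citing the presupposed well-posedness of \eqref{Mod_Wave_eq}--\eqref{Mod_Wave_Initial_eq}, or be flagged as requiring hypotheses beyond those in the statement.
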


\begin{proof}
Equation \eqref{Mod_Fourier_Helm_eq} is obtained directly from the application of the Fourier transform to \eqref{Mod_Wave_eq}. The condition \eqref{Mod_Fourier_Hardy_eq} is according to the Paley--Wiener theorem, see, for example, \cite[Theorem 9.2]{Rud87}, equivalent to the condition \eqref{Mod_Wave_Initial_eq}, stating that $t\mapsto (E-E^{(0)})(t,x)$ has for every $x\in\R^3$ only support in $[0,\infty)$.
\end{proof}

In frequency-domain optical coherence tomography, we detect with a spectrometer at a position $x_0\in\R^3$ outside the medium the intensity of the Fourier components of the superposition of the back-scattered light from the sample and the reference beam, which is the reflection of the incident laser beam from a mirror at some fixed position.

Here, we consider two independent measurements for two different positions of the mirror in order to overcome the problem of phase-less data, see \cite{ElbMinVes19_report}. Thus, we obtain the data
\[ m_{0,\sigma}(\omega) = |\check E_\sigma(\omega,x_0)|\text{ and }m_{i,\sigma}(\omega) = |\check E_\sigma(\omega,x_0)+\check E^{(\mathrm r)}_i(\omega,x_0)|,\;i\in\{1,2\},\]
for the two known reference waves $E^{(\mathrm r)}_1:\R\times\R^3\to\R^3$ and $E^{(\mathrm r)}_2:\R\times\R^3\to\R^3$, which are solutions of Maxwell's equations in the homogeneous background medium (usually well approximated by the vacuum). 

We see that if the points $0$, $ \check E^{(\mathrm r)}_1(\omega,x_0)$, and $\check E^{(\mathrm r)}_2(\omega,x_0)$ in the complex plane do not lie on a single straight line, we can recover the complex valued electric field $\check E(\omega,x_0)$ for every $\omega\in\R$ by intersecting the three circles
\[\partial B_{m_{0,\sigma}(\omega)}(0)\cap\partial B_{m_{1,\sigma}(\omega)}(-\check E^{(\mathrm r)}_1(\omega,x_0))\cap\partial B_{m_{2,\sigma}(\omega)}(-\check E^{(\mathrm r)}_2(\omega,x_0)).
\]
In the following, we assume that the fields $E^{(\mathrm r)}_1$ and $E^{(\mathrm r)}_2$ are chosen such that the above condition is satisfied and we can recover the function
\[ m_\sigma(\omega) = \check E_\sigma(\omega,x_0)\text{ for all }\omega\in\R. \]

However, this information is still not enough for reconstructing the material parameter $\chi$, see, for example, \cite{ElbMinSch15}. Thus, we make the a priori assumption that the illuminated region of the medium can be well approximated by a layered medium.
Since the layers are typically not completely homogeneous, we also allow for randomly distributed small inclusions in every layer. 
 
Thus, we describe $\chi$ to be of the form
\begin{equation}\label{Mod_susc_eq}
\chi_\sigma(t,x) = \chi_j (t)+\psi_{j,\sigma_j}(t,x)
\end{equation}
in the $j$-th layer $\{x\in\R^3\mid z_{j+1}<x_3<z_j\}$, $j\in\{1,\ldots,J\}$, where we write the measure space as a product $\mathcal X=\prod_{j=1}^J\mathcal X_j$ with each factor representing the state of one layer.
Here, $\chi_j$ is the homogeneous background susceptibility of the layer and $\psi_j$ is the random contribution caused by some small particles in the layer. Outside these layers, we set $\chi_\sigma(t,x)=\chi_0(t)$ for some homogeneous background susceptibility $\chi_0$.

To simplify the analysis, we will assume that the scatterers in the $j$-th layer only occur at some distance to the layer boundaries $z_j$ and $z_{j+1}$, say between $Z_j$ and $\zeta_j$, where $z_{j+1}<Z_j<\zeta_j<z_j$. Moreover, we choose the particles independently, identically, uniformly distributed on the part $U_{j,L_j}=[-\frac12L_j,\frac12L_j]\times[-\frac12L_j,\frac12L_j]\times[Z_j,\zeta_j]$ of the layer for some width $L_j>0$. Concretely, we assume that we have in the $j$-th layer for some number $N_j$ of particles the probability measure $P_{j,N_j,L_j}$ on the probability space $\mathcal X_j=(U_{j,L_j})^{N_j}$ given by
\begin{equation}\label{Mod_Prob_eq}
P_{j,N_j,L_j}({\textstyle\prod_{\ell=1}^{N_j}}A_\ell) = \prod_{\ell=1}^{N_j}\frac{|A_\ell|}{L_j^2(\zeta_j-Z_j)}
\end{equation}
for all measurable subsets $A_\ell\subset U_{j,L_j}$, where $|A_\ell|$ denotes the three dimensional Lebesgue measure of the set $A_\ell$.

The full probability measure $P=P_{N,L}$ is consistently chosen as the direct product $P_{N,L}=\prod_{j=1}^JP_{j,N_j,L_j}$ on $\mathcal X=\prod_{j=1}^J\mathcal X_j$.

The particles themselves, we model in each layer as identical balls with a sufficiently small radius $R$ and a homogeneous susceptibility $\chi^{(\mathrm p)}_j$. Thus, we define for a realisation $\sigma_j\in\mathcal X_j$ of the $j$-th layer the contribution of the particles to the susceptibility by
\begin{equation}\label{Mod_suscRandom_eq}
\psi_{j,\sigma_j}(t,x) = \sum_{\ell=1}^{N_j}\cchi_{B_R(\sigma_{j,\ell})}(x)\,(\chi^{(\mathrm p)}_j(t)-\chi_j(t)),
\end{equation}
where we ignore the problem of overlapping particles. We denote by $\cchi_A$ the characteristic function of a set $A$ and by $B_r(y)$ the open ball with radius $r$ around a point~$y$.

\section{Domain decomposition of the solution}\label{DD_se}

The layered structure of the medium allows us to decompose the solution as a series of solution operators for the single layers. To do so, we split the medium at a horizontal stripe where the medium is homogeneous and consider the two subproblems where once the region above and once the region below is replaced by the homogeneous susceptibility $X_0:\R\to\R$ in the stripe. We write the stripe as the set $\{x\in\R^3\mid z-\varepsilon<x_3<z+\varepsilon\}$ for some $z\in\R$ and some height $\varepsilon>0$ and parametrise the electric susceptibility in the form
\begin{equation}\label{DD_suscept_eq}
\chi(t,x) = \begin{cases}X_1(t,x)&\text{if }x\in \Omega_1 =\{y\in\R^3\mid y_3>z-\varepsilon\},\\ 
X_2(t,x)&\text{if }x\in \Omega_2=\{y\in\R^3\mid y_3<z +\varepsilon\}.\end{cases}
\end{equation}
with the necessary compatibility condition that $X_1$ and $X_2$ coincide in the intersection $\Omega_1 \cap \Omega_2$, where they should both be equal to the homogeneous susceptibility $X_0$.

Additionally, we have the assumption that the medium is bounded in vertical direction. We can therefore assume that for some $z_-<z_+$, the susceptibilities $X_1$ and $X_2$ are homogeneous in $\Omega_+=\{x\in\R^3\mid x_3>z_+\}\subset \Omega_1$ and $\Omega_-=\{x\in\R^3\mid x_3<z_-\}\subset \Omega_2$, respectively. We set
\[ X_1(t,x)=X_+(t)\text{ for all }x\in \Omega_+\text{ and }X_2(t,x)=X_-(t)\text{ for all }x\in \Omega_-. \] 

Since we are solving Maxwell's equations on the whole space, we extend $X_1$ and $X_2$ by the homogeneous susceptibility $X_0$:
\[ X_1(t,x) = X_0(t) \text{ for all } x\in \Omega_2\text{ and }X_2(t,x) = X_0(t) \text{ for all } x\in \Omega_1, \]
see picture (a) in \autoref{fig1} for an illustration of the notation. 

\begin{figure}[t!]
\centering
\begin{subfigure}[b]{.4\textwidth}
\includegraphics[height=7cm]{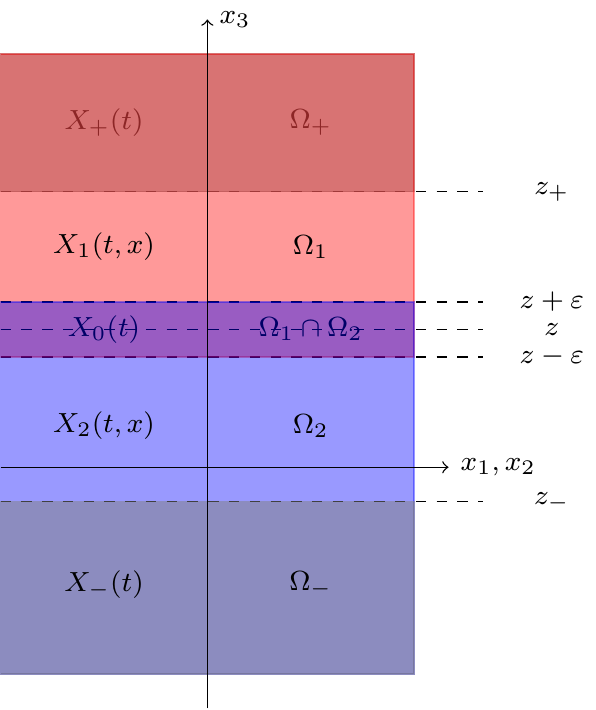}
\caption{The subdomains and the \\
corresponding optical parameters.}
\end{subfigure}\qquad
\begin{subfigure}[b]{.4\textwidth}
\includegraphics[height=3cm]{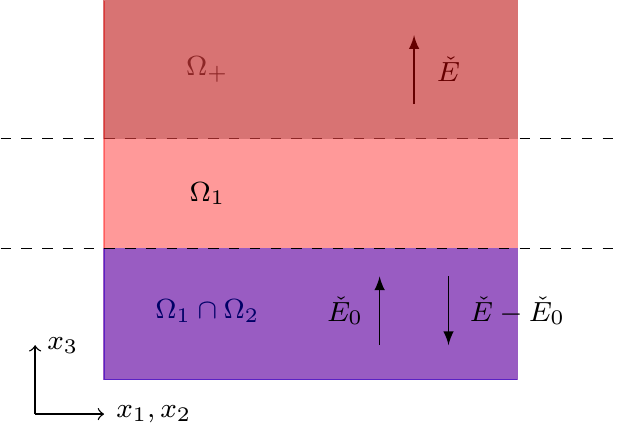}
\caption{The fields related to the operator $\mathcal{G}_1$.}

\vspace{2ex}

\includegraphics[height=3cm]{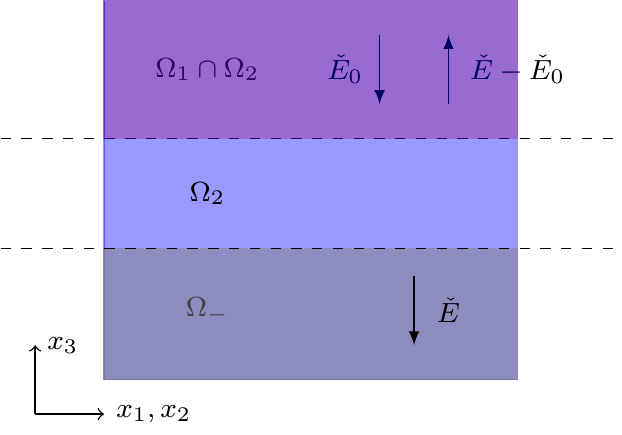}
\caption{The fields related to the operator $\mathcal{G}_2$.}
\end{subfigure}
\caption{The geometry and the notation used in this section.}
\label{fig1}
\end{figure}

The aim is then to reduce the calculation of the electric field in the presence of $\chi$ to the subproblems of determining the electric fields in the presence of $X_1$ and $X_2$, independently. To do so, we consider the solution in the intersection $\Omega_1\cap \Omega_2$ and split it there into waves moving in the positive and negative $e_3$ direction.

\begin{lemma}\label{DD_SolHom_th}
Let a homogeneous susceptibility $\chi:\R\to\R$ be given on a stripe $\Omega_0=\{x\in\R^3\mid x_3\in(z_0-\varepsilon,z_0+\varepsilon)\}$. Then, every solution $\check E:\R\times\R^3\to\C^3$ of
\begin{equation}\label{DD_SolHom_VecHelm_eq}
\curl\curl\check E(\omega,x)-\frac{\omega^2}{c^2}(1+\check\chi(\omega))\check E(\omega,x) = 0\text{ for all }\omega\in\R,\;x\in \Omega_0,
\end{equation}
admits the form
\begin{multline}\label{DD_SolHom_FourForm_eq}
\check E(\omega,x) = \int_{\R^2}e_1(k_1,k_2)\e^{-\i x_3\sqrt{\frac{\omega^2}{c^2}(1+\check\chi(\omega))-k_1^2-k_2^2}}\e^{\i(k_1x_1+k_2x_2)}\d(k_1,k_2) \\
+\int_{\R^2}e_2(k_1,k_2)\e^{\i x_3\sqrt{\frac{\omega^2}{c^2}(1+\check\chi(\omega))-k_1^2-k_2^2}}\e^{\i(k_1x_1+k_2x_2)}\d(k_1,k_2)
\end{multline}
for some coefficients $e_1,\,e_2:\R^2\to\C^3$.
\end{lemma}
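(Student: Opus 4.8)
The plan is to exploit the homogeneity of $\chi$: since $\check\chi(\omega)$ does not depend on $x$, the factor $\kappa(\omega)^2:=\frac{\omega^2}{c^2}(1+\check\chi(\omega))$ appearing in \eqref{DD_SolHom_VecHelm_eq} is constant in $x$ throughout the stripe $\Omega_0$, so that the equation is a linear system with constant coefficients. First I would decouple the three components. Taking the divergence of \eqref{DD_SolHom_VecHelm_eq} and using $\div\curl=0$ gives $\kappa(\omega)^2\div\check E(\omega,\cdot)=0$, so that — at least for those frequencies with $\kappa(\omega)\neq0$ — the field is divergence free. The vector identity $\curl\curl=\grad\div-\Delta$ then collapses \eqref{DD_SolHom_VecHelm_eq} to the componentwise scalar Helmholtz equation
\[ \Delta\check E_i(\omega,x)+\kappa(\omega)^2\check E_i(\omega,x)=0,\quad i\in\{1,2,3\}, \]
on $\Omega_0$.

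Next I would perform a Fourier transform in the two horizontal variables $(x_1,x_2)$, which is natural because $\Omega_0=\R^2\times(z_0-\varepsilon,z_0+\varepsilon)$ is unbounded precisely in these directions. Writing $\check E_i(\omega,x)=\int_{\R^2}\hat E_i(\omega,k_1,k_2,x_3)\e^{\i(k_1x_1+k_2x_2)}\d(k_1,k_2)$, the scalar Helmholtz equation turns, for each fixed $(k_1,k_2)$, into the ordinary differential equation
\[ \partial_{x_3}^2\hat E_i+\big(\kappa(\omega)^2-k_1^2-k_2^2\big)\hat E_i=0 \]
in the variable $x_3$ on the interval $(z_0-\varepsilon,z_0+\varepsilon)$. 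This has constant coefficients, so its general solution is the linear combination $a_i(k_1,k_2)\e^{\i x_3\sqrt{\kappa(\omega)^2-k_1^2-k_2^2}}+b_i(k_1,k_2)\e^{-\i x_3\sqrt{\kappa(\omega)^2-k_1^2-k_2^2}}$; the degenerate case $\kappa(\omega)^2=k_1^2+k_2^2$, in which a term linear in $x_3$ appears, occurs only on a null set of $(k_1,k_2)$ and is therefore irrelevant for the integral. Reassembling the components into the vectors $e_2=(a_1,a_2,a_3)$ and $e_1=(b_1,b_2,b_3)$ and substituting back into the inverse Fourier integral yields exactly the representation \eqref{DD_SolHom_FourForm_eq}. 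Note that the lemma only asserts that every solution \emph{admits} this form, not the converse, so the divergence-free constraint need not be tracked as a relation between $e_1$ and $e_2$; it is used solely to pass to the componentwise equation.

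The main obstacle I expect is not the algebra but the functional-analytic setting in which these manipulations are justified. The lemma states no integrability or growth hypotheses on $\check E$, so the horizontal Fourier transform and the interchange of $\partial_{x_3}$ with the integral have to be carried out in a suitable (tempered-)distributional sense, and the pointwise solution of the $x_3$-ordinary differential equation has to be read as holding for almost every $(k_1,k_2)$. A second, genuinely special point are the frequencies with $\kappa(\omega)=0$ — that is, $\omega=0$ or $\omega$ a zero of $1+\check\chi$ — where the divergence argument degenerates and curl-free fields enter the solution space; these would have to be excluded or treated separately.
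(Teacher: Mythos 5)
Your proof is correct and follows essentially the same route as the paper's: taking the divergence to obtain $\div\check E=0$, reducing \eqref{DD_SolHom_VecHelm_eq} to three scalar Helmholtz equations, and then Fourier transforming in $(x_1,x_2)$ and solving the resulting ordinary differential equation in $x_3$. In fact, you are more careful than the paper itself, which silently passes over the degenerate frequencies with $\frac{\omega^2}{c^2}(1+\check\chi(\omega))=0$, the null set $k_1^2+k_2^2=\frac{\omega^2}{c^2}(1+\check\chi(\omega))$, and the distributional sense in which the horizontal Fourier transform is taken.
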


\begin{proof}
Taking the divergence of \eqref{DD_SolHom_VecHelm_eq}, we see that $\div\check E=0.$ Then, equation \eqref{DD_SolHom_VecHelm_eq} reduces to the three independent Helmholtz equations
\[ \Delta\check E(\omega,x)+\frac{\omega^2}{c^2}(1+\check\chi(\omega))\check E(\omega,x) = 0\text{ for all }\omega\in\R,\;x\in \Omega_0. \]
Applying the Fourier transform with respect to $x_1$ and $x_2$ and solving the resulting ordinary differential equation in $x_3$ gives us \eqref{DD_SolHom_FourForm_eq}.
\end{proof}

\begin{definition}
Let $\check E$ be a solution of the equation \eqref{DD_SolHom_VecHelm_eq} on some stripe $\Omega_0$, written in the form \eqref{DD_SolHom_FourForm_eq}. We then call $\check E$ a downwards moving solution if $e_2=0$ and an upwards moving solution if $e_1=0$.
\end{definition}

Moreover, we define the solution operators $\mathcal G_1$ and $\mathcal G_2$. To avoid having to define an incident wave on the whole space, we replace the condition \eqref{Mod_Fourier_Hardy_eq} by radiation conditions of the form that we specify the upwards moving part on a stripe below the region and the downwards moving part on a stripe above the region.
\begin{definition}\label{DD_SolOp_de}
Let $\chi$ be given as in \eqref{DD_suscept_eq} and $\check E_0$ be an upwards moving solution in $\Omega_1\cap \Omega_2$. Then, we define $\mathcal G_1\check E_0$ as a solution $\check E$ of the equation
\[ \curl\curl\check E(\omega,x)-\frac{\omega^2}{c^2}(1+\check X_1(\omega,x))\check E(\omega,x) = 0 \]
fulfilling the radiation condition that $\check E-\check E_0$ is a downwards moving solution in $\Omega_1\cap \Omega_2$ and that $\check E$ is an upwards moving solution in $\Omega_+$, see picture (b) in \autoref{fig1}.

Analogously, we define $\mathcal G_2\check E_0$ for a downwards moving solution $\check E_0$ in $\Omega_1\cap \Omega_2$ as a solution $\check E$ of the equation
\[ \curl\curl\check E(\omega,x)-\frac{\omega^2}{c^2}(1+\check X_2(\omega,x))\check E(\omega,x) = 0 \]
fulfilling the radiation condition that $\check E-\check E_0$ is an upwards moving solution in $\Omega_1\cap \Omega_2$ and that $\check E$ is a downwards moving solution in $\Omega_-$, see picture (c) in \autoref{fig1}.
\end{definition}

\begin{remark}\label{DD_Uniq_th}
We do not discuss the uniqueness of these solutions at this point, since we will only need the result for particular, simplified problems where the verification that this gives the desired solution can be done directly. 

Instead we will simply assume that the susceptibilities $\chi$, $X_1$, and $X_2$ are such that the only solution $\check E$ in the presence of this susceptibility for which $\check E$ is upwards moving on $\Omega_+$ and downwards moving on $\Omega_-$ is the trivial solution $\check E=0$, meaning that there is only the trivial solution in the absence of an incident wave.
\end{remark}

\begin{lemma}\label{DD_Decomp_th}
Let $\chi$ be given by \eqref{DD_suscept_eq} and denote by $\mathcal G_1$, $\mathcal G_2$ the solution operators as in \autoref{DD_SolOp_de}.
Let further $E^{(0)}$ be an incident wave on $\chi$ which is moving downwards and $E_1$ be the induced electric fields in the presence of $X_1$.

Then, provided the following series converge, we have that the function $E$ defined by
\begin{align*}
\check E(\omega,x) = \begin{cases}\displaystyle\check E_1(\omega,x)+\sum_{j=0}^\infty\mathcal G_1(\tilde{\mathcal G}_2\tilde{\mathcal G}_1)^j\tilde{\mathcal G}_2\check E_1(\omega,x)&\text{if }x\in \Omega_1, \\
\displaystyle \sum_{j=0}^\infty\mathcal G_2(\tilde{\mathcal G}_1\tilde{\mathcal G}_2)^j\check E_1(\omega,x)&\text{if }x\in \Omega_2,
\end{cases}
\end{align*}
where we set $\tilde{\mathcal G}_i=\mathcal G_i-\mathrm{id}$, $i\in\{1,2\}$, is an electric field in the presence of $\chi$ fulfilling the radiation conditions that $\check E-\check E^{(0)}$ is an upwards moving wave in $\Omega_+$ and $\check E$ is a downwards moving wave in $\Omega_-$.
\end{lemma}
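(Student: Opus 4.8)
The plan is to verify directly that the function $\check E$ displayed above is an electric field in the presence of $\chi$ satisfying the two prescribed radiation conditions, splitting the verification into three parts: that each branch solves the correct vector Helmholtz equation on its domain, that the two branches agree on the overlap $\Omega_1\cap\Omega_2$, and that the radiation conditions hold. The guiding observation is that every summand is, by construction, a solution of one of the two sub-equations: the terms $\mathcal G_1(\cdots)$ solve the equation for $X_1$ and the terms $\mathcal G_2(\cdots)$ solve the equation for $X_2$. Since $X_1=\chi$ on $\Omega_1$ and $X_2=\chi$ on $\Omega_2$, summing (which is legitimate under the assumed convergence, understood as local uniform convergence so that the limit of these analytic solutions is again a solution) yields a solution of the $\chi$-equation separately on $\Omega_1$ and on $\Omega_2$.

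Before this I would record the enabling fact that $\check E_1$ is downwards moving on the whole overlap. Indeed, $X_1=X_0$ is homogeneous throughout $\Omega_2$, and since $\check E_1$ is the field induced by the downwards moving incident wave $E^{(0)}$, the Hardy-space constraint \eqref{Mod_Fourier_Hardy_eq} forces the scattered part to be purely outgoing, i.e.\ downwards, in $\Omega_-$; as the upward and downward coefficients $e_1,e_2$ of \autoref{DD_SolHom_th} are constant across the connected homogeneous region $\Omega_2$, the field is downwards moving on all of $\Omega_2$, in particular on the overlap. This is exactly what makes $\tilde{\mathcal G}_2\check E_1$ meaningful, and it lets the wave types chain consistently: $\tilde{\mathcal G}_1=\mathcal G_1-\mathrm{id}$ sends an upwards moving wave to its downwards moving reflection and $\tilde{\mathcal G}_2=\mathcal G_2-\mathrm{id}$ sends a downwards moving wave to its upwards moving reflection.

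The crux is the agreement on the overlap, which I would establish by comparing the upward and downward parts of the two branches. Writing $d_0=\check E_1$ restricted to the overlap (downwards moving) and abbreviating the reflection operators $r_1=\tilde{\mathcal G}_1$, $r_2=\tilde{\mathcal G}_2$ on the stripe, one has there $\mathcal G_1u=u+r_1u$ for upwards moving $u$ and $\mathcal G_2d=d+r_2d$ for downwards moving $d$. Using $(\tilde{\mathcal G}_2\tilde{\mathcal G}_1)^j\tilde{\mathcal G}_2\check E_1=(r_2r_1)^jr_2d_0$ and $(\tilde{\mathcal G}_1\tilde{\mathcal G}_2)^j\check E_1=(r_1r_2)^jd_0$ on the overlap, a short reindexing shows that both branches have downward part $\sum_{j\ge0}(r_1r_2)^jd_0$ and upward part $r_2\sum_{j\ge0}(r_1r_2)^jd_0$. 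By the uniqueness of the decomposition in \autoref{DD_SolHom_th}, equality of the upward and downward parts means the two branches coincide on the open overlap; hence $\check E$ is well defined on $\Omega_1\cup\Omega_2=\R^3$ and, by the equation-solving property noted above, solves the $\chi$-equation everywhere.

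Finally, for the radiation conditions: on $\Omega_+\subset\Omega_1$ only the first branch contributes, and since $\check E_1-\check E^{(0)}$ is upwards moving in $\Omega_+$ (again from the induced-field property) while every term $\mathcal G_1(\cdots)$ is upwards moving in $\Omega_+$ by \autoref{DD_SolOp_de}, the difference $\check E-\check E^{(0)}$ is upwards moving there; symmetrically, on $\Omega_-\subset\Omega_2$ only the second branch contributes and each $\mathcal G_2(\cdots)$ is downwards moving, so $\check E$ is downwards moving in $\Omega_-$. I expect the main obstacle to be the up/down bookkeeping — establishing that $\check E_1$ is downwards moving on the overlap and that $\tilde{\mathcal G}_1$, $\tilde{\mathcal G}_2$ propagate the wave types so that the two geometric series regroup into identical upward and downward parts — together with justifying the term-by-term differentiation and reindexing under the merely assumed convergence.
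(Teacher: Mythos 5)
Your proof is correct and takes essentially the same route as the paper: each branch solves the equation on its own subdomain because $X_1=\chi$ on $\Omega_1$ and $X_2=\chi$ on $\Omega_2$, the two branches agree on the overlap by the same geometric-series regrouping (the paper performs this as pure operator algebra via $\mathcal G_i=\tilde{\mathcal G}_i+\mathrm{id}$, which makes your extra appeal to uniqueness of the up/down decomposition unnecessary, though harmless), and the radiation conditions in $\Omega_+$ and $\Omega_-$ follow from the defining properties of $\check E_1$, $\mathcal G_1$, and $\mathcal G_2$. Your explicit bookkeeping of wave types is just a more detailed version of the paper's opening remark that $\check E_1$ is downwards moving on the overlap and that the ranges of $\tilde{\mathcal G}_2$ and $\tilde{\mathcal G}_1$ consist of upwards and downwards moving solutions, respectively, so the compositions are well defined.
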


\begin{proof}
First, we remark that the composition of the operators is well defined, since $\check E_1\in\mathcal H(\check E^{(0)})$ is a downwards moving solution in $\Omega_1\cap \Omega_2$, see \autoref{Mod_Fourier_th}, the range of $\tilde{\mathcal G}_2$ consists of upwards moving solutions, and the range of $\tilde{\mathcal G}_1$ consists of downwards moving solutions.

The field $\check E$ is seen to satisfy \eqref{Mod_Fourier_Helm_eq} in $\Omega_1$ by using the definitions of $E_1$ and the solution operator $\mathcal G_1$  on $\Omega_1$. Similarly, using the definition of $\mathcal G_2,$ we get that the function $\check E$ satisfies \eqref{Mod_Fourier_Helm_eq} in $\Omega_2$.

Therefore, it only remains to check that the two formulas coincide in the intersection $\Omega_1\cap \Omega_2$. Using that $\mathcal G_i=\tilde{\mathcal G}_i+\mathrm{id}$, $i\in\{1,2\}$, we find that
\begin{align*}
\check E_1+\sum_{j=0}^\infty\mathcal G_1(\tilde{\mathcal G}_2\tilde{\mathcal G}_1)^j\tilde{\mathcal G}_2\check E_1
&=\check E_1+\sum_{j=0}^\infty\tilde{\mathcal G}_1(\tilde{\mathcal G}_2\tilde{\mathcal G}_1)^j\tilde{\mathcal G}_2\check E_1+\sum_{j=0}^\infty(\tilde{\mathcal G}_2\tilde{\mathcal G}_1)^j\tilde{\mathcal G}_2\check E_1 \\
&= \sum_{j=0}^\infty(\tilde{\mathcal G}_1\tilde{\mathcal G}_2)^j\check E_1+\sum_{j=0}^\infty\tilde{\mathcal G}_2(\tilde{\mathcal G}_1\tilde{\mathcal G}_2)^j\check E_1
= \sum_{j=0}^\infty\mathcal G_2(\tilde{\mathcal G}_1\tilde{\mathcal G}_2)^j\check E_1.
\end{align*}

Moreover, we have that $\check E - \check E_1$ is by construction an upwards moving wave in $\Omega_+$, and therefore so is $\check E-\check E^{(0)}$. Similarly, the wave $\check E$ is a downwards moving wave in $\Omega_-$.
\end{proof}

If we are in a case where our uniqueness assumption mentioned in \autoref{DD_Uniq_th} holds, then \autoref{DD_Decomp_th} allows us to iteratively reduce the problem of determining the electric field in the presence of the susceptibility $\chi_\sigma$, defined in \eqref{Mod_susc_eq}, to problems of simpler susceptibilities. To this end, we could, for example, successively apply the result to values $z\in(\zeta_j,z_j)$ and $z\in(z_{j+1},Z_j)$, $j=1,\ldots,J$, where each successive step is only used to further simplify the operator $\mathcal G_2$ from the previous step. This thus leads to a sort of layer stripping algorithm, see, for example, \cite{ElbMinVes19_report}, where a similar argument was presented.

\section{Wave propagation through a scattering layer}\label{Scat_se}

Using the above analysis, we can calculate the electric field in the presence of a layered medium of the form \eqref{Mod_susc_eq} as a combination of the solutions of the following two subproblems.

\begin{problem}\label{problem1} Let $j\in\{0,\ldots,J-1\}$. Find the electric field induced by some incident field in the presence of the piecewise homogeneous susceptibility $\chi$ given by
\begin{equation}\label{Scat_SuscLayer_eq}
\chi(t,x)=\begin{cases}\chi_j (t)&\text{if }x_3>z_{j+1},\\ \chi_{j+1} (t)&\text{if }x_3<z_{j+1}.\end{cases}
\end{equation}
\end{problem}

\begin{problem}\label{problem2} Let $\sigma\in\mathcal{X}$ and $j\in\{1,\ldots,J\}$. Find the electric field induced by some incident field in the presence of the susceptibility $\chi$ given by
\begin{equation}\label{Scat_SuscRandom_eq}
\chi(t,x)=\chi_j(t)+\psi_{j,\sigma}(t,x),
\end{equation}
where the function $\psi_j$ is described by \eqref{Mod_suscRandom_eq}.
\end{problem}

We thus fix a layer $j\in\{0,\ldots,J\}$, and to simplify the calculations, we restrict ourselves in both subproblems to an illumination by a downwards moving plane wave of the form 
\begin{equation}\label{Scat_Incident_eq}
\check E^{(0)}(\omega,x) = \check f(\omega)\e^{-\i\frac\omega cn_j (\omega) x_3}\eta
\end{equation}
for some function $f:\R\to\R$ and a polarisation vector $\eta\in\mathds S^1\times\{0\}$. Here we define the complex-valued refractive indices for all $j\in\{0,\ldots,J\}$ by
\begin{equation}\label{Scat_Refractive_eq}
n_j:\R\to\mathds H,\;n_j (\omega) = \sqrt{1+\check\chi_j (\omega)}.
\end{equation}

Then, the solution of \autoref{problem1} can be explicitly written down.

\begin{lemma}\label{Scat_Layered_th}
Let $j\in\{0,\ldots,J-1\}$ and $E^{(0)}$ be the incident wave given in \eqref{Scat_Incident_eq}. Then, the electric field $E$ induced by $E^{(0)}$ in the presence of a susceptibility $\chi$ of the form \eqref{Scat_SuscLayer_eq} is given by
\[ \check E(\omega,x) = \check f(\omega)\left(\e^{-\i\frac\omega cn_j(\omega)x_3}-\frac{n_{j+1}(\omega)-n_j (\omega)}{n_{j+1}(\omega)+n_j (\omega)}\e^{-\i\frac\omega c n_j (\omega)z_{j+1}}\e^{\i\frac\omega c n_j (\omega)(x_3-z_{j+1})}\right)\eta \]
for $x_3>z_{j+1}$, and by
\[
\check E(\omega,x) = \check f(\omega)\frac{2n_j (\omega)}{n_{j+1} (\omega)+n_j (\omega)}\e^{-\i\frac\omega c n_j (\omega)z_{j+1}}\e^{-\i\frac\omega c n_{j+1}(\omega)(x_3-z_{j+1})}\eta
\]
for $x_3<z_{j+1}$, where the refractive indices $n_j$ and $n_{j+1}$ are defined by \eqref{Scat_Refractive_eq}.
\end{lemma}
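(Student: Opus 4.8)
The plan is to verify directly that the piecewise-defined field in the statement is the induced field, following the strategy already announced in \autoref{DD_Uniq_th}: since $\chi$ is piecewise homogeneous, I solve the vector Helmholtz equation \eqref{Mod_Fourier_Helm_eq} separately in the two half-spaces $x_3>z_{j+1}$ and $x_3<z_{j+1}$ and then glue the two solutions along the interface. First I would observe that the ansatz has the form $\check E(\omega,x)=\check f(\omega)\,u(\omega,x_3)\,\eta$ with a purely tangential polarisation $\eta\in\mathds S^1\times\{0\}$ and no dependence on $x_1,x_2$; hence $\div\check E=0$, so that $\curl\curl\check E=-\Delta\check E$ and \eqref{Mod_Fourier_Helm_eq} collapses to the scalar equation $\partial_{x_3}^2 u+\frac{\omega^2}{c^2}(1+\check\chi)u=0$ for the single amplitude $u$. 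A one-line computation then shows that the exponentials $\e^{\pm\i\frac\omega cn_j x_3}$ and $\e^{\pm\i\frac\omega cn_{j+1}x_3}$ solve this equation in the respective half-spaces, using the definition \eqref{Scat_Refractive_eq} of the refractive index.

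Next I would impose the radiation conditions that select the physically correct combination of these exponentials. In the region $x_3>z_{j+1}$ I require the scattered field $\check E-\check E^{(0)}$ to be upwards moving, i.e.\ proportional to $\e^{+\i\frac\omega cn_j x_3}$, and in the region $x_3<z_{j+1}$ I require the total field to be downwards moving, i.e.\ proportional to $\e^{-\i\frac\omega cn_{j+1}x_3}$. This reduces the ansatz to the incident wave plus a single reflected wave above the interface, with one unknown reflection amplitude, and a single transmitted wave below, with one unknown transmission amplitude; both are exactly the forms written in the statement up to these two scalar coefficients.

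It then remains to fix the coefficients from the matching conditions at $x_3=z_{j+1}$. The key step is to identify the correct transmission conditions: because $\check\chi$ has only a bounded jump across the interface and the tangential, $x_1x_2$-independent geometry makes the $\grad\div$ contribution vanish, the second-order equation for $u$ forces $u$ to be $C^1$ across $z_{j+1}$, so that both $\check E$ and its normal derivative $\partial_{x_3}\check E$ are continuous there (equivalently, continuity of the tangential electric and magnetic fields, with no induced surface currents since the medium is non-magnetic). Evaluating the one-sided limits of $u$ and $\partial_{x_3}u$ at $z_{j+1}$ yields a $2\times 2$ linear system for the reflection and transmission amplitudes whose solution is precisely the Fresnel coefficients $\frac{n_{j+1}-n_j}{n_{j+1}+n_j}$ and $\frac{2n_j}{n_{j+1}+n_j}$ appearing in the statement.

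I expect the only genuinely delicate point to be the justification of the interface conditions: one must argue that continuity of $\check E$ and $\partial_{x_3}\check E$ are indeed the conditions forced by the distributional reading of \eqref{Mod_Fourier_Helm_eq} with a discontinuous coefficient, rather than merely postulating them. Everything else — solving the scalar Helmholtz equation, reading off the upwards/downwards structure, and solving the small linear system — is a routine computation.
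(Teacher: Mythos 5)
Your proposal is correct and follows essentially the same route as the paper: solve the (scalarised) Helmholtz equation \eqref{Mod_Fourier_Helm_eq} in each half-space, select the admissible exponentials by requiring that $\check E^{(0)}$ is the only incoming wave, and fix the two remaining amplitudes by matching at $x_3=z_{j+1}$; the paper merely verifies the stated formulas while you derive the coefficients from the resulting $2\times2$ system, which is an inessential difference. One point deserves comment, and it is in your favour: the paper's proof states the second matching condition in the weighted form $\lim_{x_3\uparrow z_{j+1}}n_{j+1}(\omega)\partial_{x_3}\check E(\omega,x)=\lim_{x_3\downarrow z_{j+1}}n_j(\omega)\partial_{x_3}\check E(\omega,x)$, which the formulas of the lemma do \emph{not} satisfy: with $r=\frac{n_{j+1}-n_j}{n_{j+1}+n_j}$ and $t=\frac{2n_j}{n_{j+1}+n_j}$ that condition reads $n_{j+1}^2t=n_j^2(1+r)$, i.e.\ $n_j=n_{j+1}$, whereas the unweighted continuity of $\partial_{x_3}\check E$ that you impose, namely $n_{j+1}t=n_j(1+r)$, does hold and is exactly what the Fresnel coefficients encode. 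Your justification of these conditions is also the right one: since the field is tangentially polarised and independent of $x_1,x_2$, one has $\div\check E=0$ identically, so the distributional equation reduces to $\partial_{x_3}^2u+\frac{\omega^2}{c^2}(1+\check\chi)u=0$ with the discontinuous coefficient appearing only in the zeroth-order term, forcing $u\in C^1$ across the interface. In short, your proof is not only correct but supplies the argument the paper compresses into ``readily verified'' and, as written there, garbles.
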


\begin{proof}
Clearly, $\check E$ satisfies the differential equation \eqref{Mod_Fourier_Helm_eq} in both regions $x_3>z_{j+1}$ and $x_3<z_{j+1}$. Moreover, $\check E^{(0)}$ is the only incoming wave in $\check E$. Therefore, it only remains to check that $\check E$ has sufficient regularity to be the weak solution along the discontinuity of the susceptibility at $x_3=z_{j+1}$, meaning that
\begin{align*}
\lim_{x_3\uparrow z_{j+1}}\check E(\omega,x) &= \lim_{x_3\downarrow z_{j+1}}\check E(\omega,x), \\
\lim_{x_3\uparrow z_{j+1}}n_{j+1}(\omega)\partial_{x_3}\check E(\omega,x) &= \lim_{x_3\downarrow z_{j+1}}n_j (\omega)\partial_{x_3}\check E(\omega,x).
\end{align*}
Both identities are readily verified.
\end{proof}

For \autoref{problem2}, the situation is more complicated and we settle for an approximate solution for the electric field. For that, we assume (using the same notation as in \eqref{Mod_suscRandom_eq}) that the susceptibility $\chi^{(\mathrm p)}_j$ of the random particles does not differ much from the background $\chi_j$, so that the difference between the induced field and the incident field becomes small, and we do a first order approximation in the difference $\chi^{(\mathrm p)}_j-\chi_j$.
For that purpose, we write the differential equation \eqref{Mod_Fourier_Helm_eq} in the form
\[ \curl\curl\check E(\omega,x)-\frac{\omega^2}{c^2}n_j^2(\omega)(1+\bar\phi_{j,\sigma_j}(\omega,x))\check E(\omega,x) = 0, \]
where, according to \eqref{Mod_suscRandom_eq},
\[ \bar\phi_{j,\sigma_j}(\omega,x) = \sum_{\ell=1}^{N_j}\cchi_{B_R(\sigma_{j,\ell})}(x)\,\phi_j(\omega), \]
and we abbreviate
\begin{equation}\label{Scat_phi_eq}
\phi_j(\omega)=\frac{\check\chi^{(\mathrm p)}_j(\omega,x)-\check\chi_j(\omega)}{1+\check\chi_j(\omega)}.
\end{equation}

In first order in $\bar\phi$, we then approximate the field by the solution $\check E^{(1)}_{N_j,\sigma_j}$ of the equation
\[ \curl\curl\check E^{(1)}(\omega,x)-\frac{\omega^2}{c^2}n_j^2(\omega)\check E^{(1)}(\omega,x) = \bar\phi_{j,\sigma_j}(\omega,x)\check E^{(0)}(\omega,x), \]
the so called Born approximation. Using that the fundamental solution $G$ of the Helmholtz equation, which by definition fulfils
\[ \Delta G(\kappa,x)+\kappa^2G(\kappa,x) = -\delta(x), \]
is given by
\[ G(\kappa,x)=\frac{\e^{\i\kappa|x|}}{4\pi|x|}, \]
we obtain the expression
\begin{multline}\label{Scat_BornSol_eq}
\check E_{N_j,\sigma_j}^{(1)}(\omega,x) = \check E^{(0)}(\omega,x) \\
+\left(\frac{\omega^2}{c^2}n_j^2(\omega)+\grad\div\right)\sum_{\ell=1}^{N_j}\int_{B_R(\sigma_{j,\ell})}G(\tfrac\omega cn_j(\omega),x-y)\phi_j(\omega)\check E^{(0)}(\omega,y)\d y
\end{multline}
for the Born approximation of the induced field, see, for example, \cite[Proposition~4]{ElbMinSch15}.

We now want to determine the expected value of $E^{(1)}_{N_j,\sigma_j}$ in the limit where the number of particles $N_j$ and the width $L_j$ of the region where the particles are horizontally distributed tend to infinity, while keeping the ratio $\rho_j=\frac{N_j}{L_j^2}$ of particles per surface area constant, that is, we want to calculate the expression
\begin{equation}\label{Scat_RandAver_eq}
\bar E^{(1)}(\omega,x) = \lim_{N_j\to\infty}\int_{\mathcal X_j}\check E_{N_j,\sigma_j}^{(1)}(\omega,x)\d P_{j,N_j,L_j(N_j)}(\sigma_j),
\end{equation}
where $L_j(N_j)=\sqrt{\tfrac{N_j}{\rho_j}}$ and $P$ denotes the probability measure introduced in \eqref{Mod_Prob_eq}.

\begin{lemma}\label{Scat_Random_th}
Let $j\in\{1,\ldots,J\}$ and $\rho_j>0$ be fixed, $E^{(0)}$ be an incident field of the form \eqref{Scat_Incident_eq}, and $\chi$ be the susceptibility specified in \eqref{Scat_SuscRandom_eq}.

Then, the expected value $\bar E^{(1)}$ of the Born approximation of the field induced by~$E^{(0)}$ in the presence of the susceptibility $\chi$ in the limit $N_j\to\infty$ with $L_j^2\rho_j=N_j$, as introduced in \eqref{Scat_RandAver_eq}, is given by
\begin{multline}\label{Scat_Random_Refl_eq}
\bar E^{(1)}(\omega,x) = \check E^{(0)}(\omega,x)+(2\pi)^4\rho_j\phi_j(\omega)\check f(\omega) \\
\times h(2R\tfrac\omega cn_j(\omega))\left(\e^{-\i\frac\omega cn_j(\omega)Z_j}-\e^{-\i\frac\omega cn_j(\omega)\zeta_j}\right)\e^{\i\frac\omega cn_j(\omega)(x_3-\mu_j)}\eta
\end{multline}
for $x_3>\zeta_j+R$ and by
\begin{multline}\label{Scat_Random_Trans_eq}
\bar E^{(1)}(\omega,x) = \check E^{(0)}(\omega,x)+\frac{(2\pi)^4}3\rho_j\phi_j(\omega)\check f(\omega) \\
\times\left(\e^{-\i\frac\omega cn_j(\omega)Z_j}-\e^{-\i\frac\omega cn_j(\omega)\zeta_j}\right)\e^{-\i\frac\omega cn_j(\omega)(x_3-\mu_j)}\eta
\end{multline}
for $x_3<Z_j-R$, where $\mu_j=\frac12(\zeta_j+Z_j)$ and
\begin{equation}\label{Scat_Random_h_eq}
h(\xi) = \frac{\sin(\xi)-\xi\cos(\xi)}{\xi^3}.
\end{equation}
\end{lemma}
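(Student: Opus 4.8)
The plan is to exploit the statistical independence of the scatterers so that the expectation in \eqref{Scat_RandAver_eq} factorises into a single--particle average, and then to evaluate that average in the continuum limit by separating the transverse from the vertical integration. Throughout I abbreviate $\kappa=\tfrac\omega c n_j(\omega)$, which satisfies $\Im\kappa>0$ since $n_j$ takes values in $\H$. Because the centres $\sigma_{j,1},\dots,\sigma_{j,N_j}$ are independent and identically uniformly distributed on $U_{j,L_j}$ according to \eqref{Mod_Prob_eq}, the expectation of the sum in \eqref{Scat_BornSol_eq} equals $N_j$ times the one--particle average, so that
\begin{multline*}
\int_{\mathcal X_j}\big(\check E^{(1)}_{N_j,\sigma_j}-\check E^{(0)}\big)(\omega,x)\d P_{j,N_j,L_j}(\sigma_j)\\
=\Big(\tfrac{\omega^2}{c^2}n_j^2(\omega)+\grad\div\Big)\frac{N_j\,\phi_j(\omega)}{L_j^2(\zeta_j-Z_j)}\int_{U_{j,L_j}}\int_{B_R(s)}G(\kappa,x-y)\check E^{(0)}(\omega,y)\,\d y\,\d s.
\end{multline*}
Using $N_j/L_j^2=\rho_j$ and letting $N_j\to\infty$ (hence $L_j\to\infty$), I would extend the transverse integration from $[-\tfrac12L_j,\tfrac12L_j]^2$ to all of $\R^2$, replacing the prefactor and domain by $\tfrac{\rho_j}{\zeta_j-Z_j}$ and $\int_{Z_j}^{\zeta_j}\!\int_{\R^2}$, provided the interchange of limit and integration can be justified.

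The key analytic ingredient is the transverse (Weyl--Sommerfeld) plane--wave identity
\[
\int_{\R^2}G(\kappa,w)\,\d(w_1,w_2)=\frac{\i}{2\kappa}\e^{\i\kappa|w_3|},
\]
the integral converging precisely because $\Im\kappa>0$. Substituting $y=s+u$ with $u\in B_R(0)$ and inserting the plane wave $\check E^{(0)}(\omega,y)=\check f(\omega)\e^{-\i\kappa y_3}\eta$ from \eqref{Scat_Incident_eq}, the transverse integration collapses the Green's function to $\tfrac{\i}{2\kappa}\e^{\i\kappa|x_3-s_3-u_3|}$. It is exactly here that the hypotheses $x_3>\zeta_j+R$ and $x_3<Z_j-R$ enter: they force $x$ to lie strictly above, respectively strictly below, the vertical support $[Z_j-R,\zeta_j+R]$ of every particle, so that $|x_3-s_3-u_3|$ has a fixed sign and the modulus can be removed. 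In the reflection case the incident and scattered phases combine to the round--trip factor $\e^{\i\kappa x_3}\e^{-2\i\kappa(s_3+u_3)}$, whereas in the transmission case they cancel to the depth--independent forward factor $\e^{-\i\kappa x_3}$.

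It then remains to carry out the $u$-- and $s_3$--integrations. Since $u_1,u_2$ have dropped out, the ball integral reduces to $\pi\int_{-R}^{R}(R^2-u_3^2)(\cdots)\,\d u_3$. In the reflection case the surviving exponential is $\e^{-2\i\kappa u_3}$, so this is, up to the volume constant, the Fourier transform of $\cchi_{B_R(0)}$ at the vertical frequency $2\kappa$; two integrations by parts produce exactly the form factor $h(2R\kappa)$ of \eqref{Scat_Random_h_eq}, and the vertical integral $\int_{Z_j}^{\zeta_j}\e^{-2\i\kappa s_3}\,\d s_3$ yields a difference of boundary exponentials which, re-centred at the midplane $\mu_j=\tfrac12(\zeta_j+Z_j)$, is the phase factor of \eqref{Scat_Random_Refl_eq}. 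The transmission case $x_3<Z_j-R$ is the forward--scattering counterpart in which the ball and slab integrals are effectively evaluated at zero vertical frequency; there $h$ degenerates to $h(0)=\tfrac13$, which is the origin of the factor $\tfrac13$ in \eqref{Scat_Random_Trans_eq}, and the ball integral reduces to the volume $\tfrac43\pi R^3$. To finish, I would observe that in both regions the averaged single--scattering field is a transverse plane wave $\propto\e^{\pm\i\kappa x_3}\eta$ with $\eta\in\mathds S^1\times\{0\}$, hence divergence free, so that $\grad\div$ annihilates it and the prefactor $\tfrac{\omega^2}{c^2}n_j^2(\omega)+\grad\div$ collapses to multiplication by $\tfrac{\omega^2}{c^2}n_j^2(\omega)$, giving \eqref{Scat_Random_Refl_eq} and \eqref{Scat_Random_Trans_eq}.

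The main obstacle I anticipate is rigorously justifying the continuum limit. The transverse Green's function integral is only conditionally convergent, so passing from the finite square to $\R^2$ and interchanging this limit with both the operator $\grad\div$ and the remaining integrations requires care. I would handle this by working first with $\Im\kappa>0$, where $G$ decays exponentially and dominated convergence applies, and then extending to real $\omega$ using the holomorphy encoded in the Hardy--space condition \eqref{Mod_Fourier_Hardy_eq}; alternatively, one may perform the transverse integration in Fourier space, where the continuum limit is manifest and the plane--wave illumination singles out the transverse wavenumber $k_1=k_2=0$.
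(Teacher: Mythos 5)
Your route is genuinely different from the paper's. Both proofs share the first step (independence of the particle positions reduces the expectation to $N_j$ times a one-particle average, with the normalisation $\tfrac{1}{L_j^2(\zeta_j-Z_j)}$ you correctly keep), but from there the paper moves entirely to Fourier space: convolution theorem, the distributional limit $N_j\hat p_{L_j(N_j)}(k)\to2\pi\rho_j\,\delta(k_1)\delta(k_2)\hat\cchi_{[-d_j,d_j]}(k_3)$ of \eqref{Scat_Random_Delta_eq}, and then a one-dimensional residue computation in $k_3$, where the hypotheses $x_3>\zeta_j+R$ and $x_3<Z_j-R$ decide in which half-plane the contour closes. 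You instead stay in physical space and use the Weyl--Sommerfeld identity $\int_{\R^2}G(\kappa,w)\d(w_1,w_2)=\tfrac{\i}{2\kappa}\e^{\i\kappa|w_3|}$ (with $\kappa=\tfrac\omega cn_j(\omega)$), the same hypotheses serving to resolve the modulus $|w_3|$. The two devices play the identical structural role --- singling out the transverse wavenumber $k_1=k_2=0$ of the plane-wave illumination --- and your observation that $\grad\div$ annihilates the resulting transverse plane wave is exactly what the paper uses tacitly. As a method your plan is sound, and the limit-interchange issue you flag is no worse than the paper's unproven delta-function limit.

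There is, however, a concrete problem at your final matching step, which you assert rather than carry out. Your slab integral is $\int_{Z_j}^{\zeta_j}\e^{-2\i\kappa s_3}\d s_3=\tfrac1{2\i\kappa}\bigl(\e^{-2\i\kappa Z_j}-\e^{-2\i\kappa\zeta_j}\bigr)$, so your reflected field carries the round-trip phases $\e^{\i\kappa(x_3-2Z_j)}-\e^{\i\kappa(x_3-2\zeta_j)}$. The statement \eqref{Scat_Random_Refl_eq} instead contains $\bigl(\e^{-\i\kappa Z_j}-\e^{-\i\kappa\zeta_j}\bigr)\e^{\i\kappa(x_3-\mu_j)}=\e^{\i\kappa(x_3-\mu_j-Z_j)}-\e^{\i\kappa(x_3-\mu_j-\zeta_j)}$, whose two terms differ from yours by the factors $\e^{\mp\i\kappa d_j}$ with $d_j=\tfrac12(\zeta_j-Z_j)$; no ``re-centring at the midplane'' reconciles them. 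The same happens in transmission: your (correct) forward-scattering computation gives an amplitude proportional to the particle volume, namely $\tfrac43\pi R^3\cdot\tfrac{\i\kappa}2\,\rho_j\phi_j(\omega)\check f(\omega)\e^{-\i\kappa x_3}$, with the slab thickness cancelling against the density normalisation, whereas \eqref{Scat_Random_Trans_eq} contains the thickness-dependent factor $\bigl(\e^{-\i\kappa Z_j}-\e^{-\i\kappa\zeta_j}\bigr)\e^{\i\kappa\mu_j}=2\i\sin(\kappa d_j)$ and no $R^3$. The discrepancy is not on your side: when the paper inserts \eqref{Scat_Random_Delta_eq} into \eqref{Scat_Random_KLFirstStep_eq}, the factor $\hat p_L$ is evaluated at the argument $\tfrac\omega cn_j(\omega)e_3-k$, so the limit should produce $\hat\cchi_{[-d_j,d_j]}\bigl(\tfrac\omega cn_j(\omega)-k_3\bigr)$ rather than the $\hat\cchi_{[-d_j,d_j]}(k_3)$ appearing in \eqref{Scat_Random_KLSecondStep_eq}; redoing the residue computation with the corrected argument reproduces exactly your round-trip phases (and, tracking the factors $R^3$ and $\tfrac1{\zeta_j-Z_j}$ that also go missing there, your constants). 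So your proof, executed carefully, establishes a corrected version of the lemma; to be complete you must make this mismatch explicit rather than claim agreement with \eqref{Scat_Random_Refl_eq} and \eqref{Scat_Random_Trans_eq} as stated, since forcing that agreement would require an incorrect step.
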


\begin{proof}
Inserting the expression \eqref{Scat_BornSol_eq} for the Born approximation of the electric field into the formula \eqref{Scat_RandAver_eq} for the expected value, we obtain the equation
\begin{equation}\label{Scat_Random_Field_eq}
\bar E^{(1)}(\omega,x) = \check E^{(0)}(\omega,x) 
+\lim_{N_j\to\infty}N_j\phi_j(\omega)\check f(\omega)\left(\frac{\omega^2}{c^2}n_j^2(\omega)+\grad\div\right)K_{L_j(N_j)}(\omega,x)\eta,
\end{equation}
where
\[ K_L(\omega,x) = \int_{U_{j,L}}\int_{B_R(\sigma_{j,1})}G(\tfrac\omega cn_j(\omega),x-y)\e^{-\i\frac\omega cn_j (\omega)x_3}\d y\d\sigma_{j,1}. \]
We recall that $U_{j,L}=[-\frac12L,\frac12L]\times[-\frac12L,\frac12L]\times[Z_j,\zeta_j]$ is for $L=L_j$ the region in which the particles in the $j$-th layer are lying. To symmetrise the expression, we set
\[ \mu_j=\frac12(\zeta_j+Z_j)\text{ and }d_j=\frac12(\zeta_j-Z_j) \]
and shift $U_{j,L}$ to the origin, by defining $\tilde U_{j,L}=U_{j,L}-\mu_j e_3$ with $e_3=(0,0,1)$.

Introducing the probability density
\[ p_L(\xi)=\frac1{|U_{j,L}|}\cchi_{U_{j,L}}(\mu_j e_3+\xi)=\frac1{2L^2d_j}\cchi_{\tilde U_{j,L}}(\xi) \]
for the variable $\xi=\sigma_{j,1}-\mu_j e_3$, we rewrite $K_L$ in the form
\begin{align*}
K_L(\omega,x) &= \int_{\R^3}p_L(\xi)\e^{-\i\frac\omega cn_j(\omega)(\mu_j+\xi_3)} \\
&\hspace{0.5cm}\times\int_{\R^3}\cchi_{B_R(0)}(y)G(\tfrac\omega cn_j(\omega),x-\mu_j e_3-\xi-y)\e^{-\i\frac\omega cn_j(\omega) y_3}\d y\d\xi \\
&= (2\pi)^{\frac32}\int_{\R^3}p_L(\xi)\e^{-\i\frac\omega cn_j(\omega)(\mu_j+\xi_3)} \\
&\hspace{0.5cm}\times\mathcal F[y\mapsto\cchi_{B_R(0)}(y)G(\tfrac\omega cn_j(\omega),x-\mu_j e_3-\xi-y)](\tfrac\omega cn_j(\omega)e_3)\d\xi \\
&= (2\pi)^3\int_{\R^3}p_L(\xi)\e^{-\i\frac\omega cn_j(\omega)(\mu_j+\xi_3)} \\
&\hspace{0.5cm}\times\big(\mathcal F[\cchi_{B_R(0)}]*\mathcal F[y\mapsto G(\tfrac\omega cn_j(\omega),x-\mu_j e_3-\xi-y)]\big)(\tfrac\omega cn_j(\omega)e_3)\d\xi.
\end{align*}
Since $G(\kappa,y)=G(\kappa,-y)$, we have with $\hat G(\kappa,k)=\mathcal F[y\mapsto G(\kappa,y)](k)$ that
\[ \mathcal F[y\mapsto G(\kappa,x-\mu_j e_3-\xi-y)](k) = \e^{-\i\left<k,x-\mu_j e_3-\xi\right>}\hat G(\kappa,k). \]
Therefore, we can write this with the notation $\hat\cchi_{B_R(0)}=\mathcal F[\cchi_{B_R(0)}]$ and $\hat p_L=\mathcal F[p_L]$ as
\begin{equation}\label{Scat_Random_KLFirstStep_eq}
\begin{split}
K_L(\omega,x) &= (2\pi)^3\e^{-\i\frac\omega cn_j(\omega)\mu_j}\int_{\R^3}\hat\cchi_{B_R(0)}(\tfrac\omega cn_j(\omega)e_3-k)\hat G(\tfrac\omega cn_j(\omega),k) \\
&\hspace{2cm}\times\e^{-\i\left<k,x-\mu_j e_3\right>}\int_{\R^3}p_L(\xi)\e^{-\i\left<\frac\omega cn_j(\omega)e_3-k),\xi\right>}\d\xi\d k \\
&= (2\pi)^{\frac92}\e^{-\i\frac\omega cn_j(\omega)\mu_j}\int_{\R^3}\hat\cchi_{B_R(0)}(\tfrac\omega cn_j(\omega)e_3-k)\\
&\hspace{2cm}\times\hat p_L(\tfrac\omega cn_j(\omega)e_3-k)\hat G(\tfrac\omega cn_j(\omega),k)\e^{-\i\left<k,x-\mu_j e_3\right>}\d k.
\end{split}
\end{equation}

Remarking that
\begin{align*}
\hat p_L(k) &= \frac1{(2\pi)^{\frac32}L^2}\int_{-\frac L2}^{\frac L2}\e^{-\i k_1\xi_1}\d\xi_1\int_{-\frac L2}^{\frac L2}\e^{-\i k_2\xi_2}\d\xi_2\int_\R\cchi_{[-d_j,d_j]}(\xi_3)\e^{-\i k_3\xi_3}\d\xi_3 \\
&= \frac1{(2\pi)^{\frac32}L^2}\frac{2\sin(\tfrac12Lk_1)}{k_1}\frac{2\sin(\tfrac12Lk_2)}{k_2}\int_\R\cchi_{[-d_j,d_j]}(\xi_3)\e^{-\i k_3\xi_3}\d\xi_3,
\end{align*}
we see that we have with $\hat\cchi_{[-d_j,d_j]}=\mathcal F[\cchi_{[-d_j,d_j]}]$ the limit
\begin{equation}\label{Scat_Random_Delta_eq}
N_j\hat p_{L_j(N_j)}(k)\to2\pi\rho_j\delta(k_1)\delta(k_2)\hat\cchi_{[-d_j,d_j]}(k_3)\;(N_j\to\infty).
\end{equation}

Using \eqref{Scat_Random_Delta_eq} in \eqref{Scat_Random_KLFirstStep_eq}, we can calculate the behaviour of $K_L$ in this limit to be
\begin{multline*}
\lim_{N_j\to\infty}N_jK_{L_j(N_j)}(\omega,x) = (2\pi)^{\frac{11}2}\e^{-\i\frac\omega cn_j(\omega)\mu_j}\rho_j \\
\times\int_\R\hat\cchi_{B_R(0)}((\tfrac\omega cn_j(\omega)-k_3)e_3)\hat G(\tfrac\omega cn_j(\omega),k_3e_3)\hat\cchi_{[-d_j,d_j]}(k_3)\e^{-\i k_3(x_3-\mu_j)}\d k_3.
\end{multline*}

Using further that $\hat G$ can be computed by taking the Fourier transform of the Helmholtz equation, giving us
\[ \hat G(\kappa,k) = \frac1{(2\pi)^{\frac32}}\frac1{|k|^2-\kappa^2}, \]
and calculating the Fourier transform of the characteristic function of a sphere to be
\begin{align*}
\hat\cchi_{B_R(0)}(k) &= \frac1{\sqrt{2\pi}}\int_0^R\int_0^\pi r^2\sin\theta\e^{-\i r|k|\cos\theta}\d\theta\d r 
= \frac1{\sqrt{2\pi}}\int_0^R\frac r{\i |k|}(\e^{\i r|k|}-\e^{-\i r|k|})\d r \\
&= \frac1{|k|^3}\sqrt{\frac2\pi}\int_0^{R|k|}\alpha\sin(\alpha)\d\alpha
= \frac1{|k|^3}\sqrt{\frac2\pi}(\sin(R|k|)-R|k|\cos(R|k|));
\end{align*}
we are left with
\begin{multline}\label{Scat_Random_KLSecondStep_eq}
\lim_{N_j\to\infty}N_jK_{L_j(N_j)}(\omega,x) = (2\pi)^4\e^{-\i\frac\omega cn_j(\omega)\mu_j}\rho_j\sqrt{\frac2\pi} \\
\times\int_{\R}h(R(\tfrac\omega cn_j(\omega)-k_3))\frac1{k_3^2-\frac{\omega^2}{c^2}n_j^2(\omega)}\hat\cchi_{[-d_j,d_j]}(k_3)\e^{-\i k_3(x_3-\mu_j)}\d k_3,
\end{multline}
where we used the abbreviation $h$ from \eqref{Scat_Random_h_eq}.

Inserting finally
\[ \hat\cchi_{[-d_j,d_j]}(k_3) = \frac1{\sqrt{2\pi}}\int_{-d_j}^{d_j}\e^{-\i k_3x_3}\d x_3 = \frac1{\sqrt{2\pi}}\frac1{\i k_3}\left(\e^{\i k_3d_j}-\e^{-\i k_3d_j}\right), \]
we see that the integrand in \eqref{Scat_Random_KLSecondStep_eq} can for $x_3-\mu_j>d_j+R$ (that is, for $x_3>\zeta_j+R$) be meromorphically extended to a function of $k_3$ in the lower half complex plane which decays sufficiently fast at infinity, so that the residue theorem yields
\begin{multline*}
\lim_{N_j\to\infty}N_jK_{L_j(N_j)}(\omega,x) = (2\pi)^4\e^{-\i\frac\omega cn_j(\omega)\mu_j}\rho_j\frac{h(2R\tfrac\omega cn_j(\omega))}{\frac{\omega^2}{c^2}n_j^2(\omega)} \\
\times\left(\e^{\i\frac\omega cn_j(\omega)d_j}-\e^{-\i\frac\omega cn_j(\omega)d_j}\right)\e^{\i\frac\omega cn_j(\omega)(x_3-\mu_j)}.
\end{multline*}
Putting this into \eqref{Scat_Random_Field_eq}, we obtain with $\mu_j+d_j=\zeta_j$ and $\mu_j-d_j=Z_j$ the formula \eqref{Scat_Random_Refl_eq}.

Similarly, we extend the integrand for $x_3-\mu_j<-d_j-R$ (that is, for $x_3<Z_j-R$) meromorphically to a function of $k_3$ in the upper half plane and find with the residue theorem that
\[
\lim_{N_j\to\infty}N_jK_{L_j(N_j)}(\omega,x) = (2\pi)^4\rho_j\frac{h(0)}{\frac{\omega^2}{c^2}n_j^2(\omega)}
\times\left(\e^{\i\frac\omega cn_j(\omega)d_j}-\e^{-\i\frac\omega cn_j(\omega)d_j}\right)\e^{-\i\frac\omega cn_j(\omega)x_3},
\]
which gives us with \eqref{Scat_Random_Field_eq} and with $h(0)=\frac13$ the formula \eqref{Scat_Random_Trans_eq}.
\end{proof}

\section{Recovering the susceptibility with optical coherence elastography}\label{OCE_se}

So far, we have presented a way to model the measurements of an optical coherence tomography setup for a layered medium of the form \eqref{Mod_susc_eq}. The question we are really interested in, however, is how to reconstruct the properties of the medium from this data.

Let us first consider one of the layer stripping steps for a susceptibility $\chi$ of the form \eqref{DD_suscept_eq} with $X_1$ being either of the form \eqref{Scat_SuscLayer_eq} of \autoref{problem1} or of the form~\eqref{Scat_SuscRandom_eq} of \autoref{problem2}. We make the additional assumption that $\supp\chi_j\subset[0,T]$ and $\supp\chi^{(\mathrm p)}_j\subset[0,T]$ for a sufficiently small $T>0$. Then, we see that by choosing a sufficiently short pulse as incident wave, that is, $E^{(0)}(t,x)=f(t+\frac{x_3}c)\eta$ (assuming for the background medium $\chi_0=0$) with $f$ having a sufficiently narrow support (this ability is of course limited by the available frequencies), we can arrange it such that the field $E$ in the presence of $\chi$ and the field $E_1$ in the presence of $X_1$ are such that $E_1(t,x_0)=E(t,x_0)$ for all $t<t_0$ and $E_1(t,x_0)=0$ for $t\ge t_0$ at the detector $x_0\in\R^3$ for some time $t_0\in\R$.
This allows us to split the reconstruction of the electric susceptibility by a layer stripping method and reconstruct each layer separately.

We will therefore only describe the inductive steps, in which we independently consider the subproblems described in \autoref{Scat_se}.

We want to start with measurements from an optical coherence elastography setup, that is, we have optical coherence tomography data for different elastic states of the medium. Concretely, we apply a force proportional to some parameter $\delta\in\R$ perpendicular to the layers of the medium, which causes under the assumption of a linear elastic medium a linear displacement of the position $z_j$ of the layer. Correspondingly, the refractive indices in the medium, defined by \eqref{Scat_Refractive_eq}, will change, which we assume to be linear as well. Thus, each layer at the compression state corresponding to $\delta$ will be characterised by a refractive index $\bar n_j$ and a vertical position~$\bar z_j$ of the beginning of the layer of the form
\[ \bar n_j(\omega,\delta)=n_j(\omega)+\delta n_j'(\omega),\text{ and } \bar z_j(\delta)=z_j+\delta z_j', \]
for some functions $n_j':\R\to\C,$ and a slope $z_j'\in\R$.

In the first reconstruction step, we have that the first layer is the background in which the medium resides, which we assume to be well described by the vacuum $n_0=1$ and not to be affected by the compression, that is, $n_0'=0$. Moreover, the distance between the detector and the medium shall be kept fixed during the compression so that $z_1'=0$ as well.

According to \autoref{Scat_Layered_th}, the measurements at the detector $x_0\in\R^3$ with $x_{0,3}>z_1$ then allow us to extract (knowing $\bar n_0=1$, the incident field $E^{(0)}$, and the position $x_3$ of the detector explicitly) the information
\begin{equation}\label{eq_joint_measLayer0}
m_0[n_1,n_1',z](\omega,\delta) = \frac{\bar n_1(\omega, \delta) -1}{\bar n_1(\omega,\delta)+1} e^{-2\i \tfrac{\omega}c z_1}.
\end{equation}
From this data, we can uniquely compute the functions $n_1$, $n_1'$, and $z_1$.

\begin{lemma}\label{th_joint_uniqRecLayer0}
Let $I\subset\R$ be a set which contains at least two incommensurable points $\omega_1,\omega_2\in I\setminus\{0\}$ (that is, $\frac{\omega_1}{\omega_2}\in\R\setminus\Q$). Assume that we have $(n_1,n_1',z_1)$ and $(\tilde n_1,\tilde n_1',\tilde z_1)$ with $n_1'(\omega)\ne0$, $\tilde n_1'(\omega)\ne0$, and
\begin{equation}\label{eq_joint_uniqRecLayer0_meas}
m_0[n_1,n_1',z_1](\omega,\delta) = m_0[\tilde n_1,\tilde n_1',\tilde z_1](\omega,\delta)\text{ for all }\omega\in I,\;\delta\in\R. 
\end{equation}

Then, we have
\[ n_1(\omega)=\tilde n_1(\omega),\;n_1'(\omega)=\tilde n_1'(\omega),\text{ and } z_1=\tilde z_1 \text{ for all }\omega\in I. \]
\end{lemma}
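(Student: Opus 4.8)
The plan is to exploit the algebraic structure of the data map. Writing $r(\zeta)=\frac{\zeta-1}{\zeta+1}$ for the Fresnel reflection coefficient and $\bar n_1(\omega,\delta)=n_1(\omega)+\delta n_1'(\omega)$, the expression \eqref{eq_joint_measLayer0} reads
\[ m_0[n_1,n_1',z_1](\omega,\delta) = r(\bar n_1(\omega,\delta))\,\e^{-2\i\frac\omega c z_1}. \]
For fixed $\omega$ the $\delta$-dependence therefore enters only through the affine curve $\delta\mapsto\bar n_1(\omega,\delta)$ composed with the Möbius map $r$, whereas $z_1$ contributes merely a $\delta$-independent unimodular phase. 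I would treat these two features separately: first isolate the phase by a limit in $\delta$, then pin down $z_1$ with the two incommensurable frequencies, and finally recover $n_1$ and $n_1'$ from the injectivity of $r$.

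First I would send $\delta\to\infty$ at a fixed $\omega\in I$. Because $n_1'(\omega)\ne0$ we have $|\bar n_1(\omega,\delta)|\to\infty$, and since $r(\zeta)\to1$ as $\zeta\to\infty$ while the phase $\e^{-2\i\frac\omega c z_1}$ is independent of $\delta$ (this is where $z_1'=0$ enters), we obtain
\[ \lim_{\delta\to\infty} m_0[n_1,n_1',z_1](\omega,\delta) = \e^{-2\i\frac\omega c z_1}. \]
Hypothesis \eqref{eq_joint_uniqRecLayer0_meas} then yields $\e^{-2\i\frac\omega c z_1}=\e^{-2\i\frac\omega c\tilde z_1}$ for every $\omega\in I$.

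The step I expect to be the crux is resolving the phase ambiguity in $z_1$. Evaluating the previous identity at $\omega_1$ and $\omega_2$ gives $\frac{\omega_i}{c}(z_1-\tilde z_1)\in\pi\Z$ for $i\in\{1,2\}$. If $z_1\ne\tilde z_1$, both resulting integers are nonzero (as $\omega_1,\omega_2\ne0$), and dividing the two relations produces $\frac{\omega_1}{\omega_2}\in\Q$, contradicting the assumed incommensurability. Hence $z_1=\tilde z_1$. This is precisely the point at which a single frequency fails: it determines $z_1$ only modulo $\frac{\pi c}\omega$, so two incommensurable frequencies are indispensable.

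Finally, with $z_1=\tilde z_1$ the phases in \eqref{eq_joint_uniqRecLayer0_meas} cancel and we are left with $r(\bar n_1(\omega,\delta))=r(\bar{\tilde n}_1(\omega,\delta))$ for all $\omega\in I$, $\delta\in\R$. As a Möbius transformation $r$ is injective away from its pole at $-1$, and the arguments can coincide with $-1$ for at most one value of $\delta$ each; hence $\bar n_1(\omega,\delta)=\bar{\tilde n}_1(\omega,\delta)$ for all but finitely many $\delta$, and, both sides being affine in $\delta$, for every $\delta\in\R$. Comparing the value at $\delta=0$ and the coefficient of $\delta$ then gives $n_1=\tilde n_1$ and $n_1'=\tilde n_1'$ on $I$. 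Apart from the number-theoretic argument, all steps are routine.
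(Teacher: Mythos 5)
Your proof is correct, and its skeleton matches the paper's: isolate the $\delta$-independent phase $\e^{-2\i\frac\omega c z_1}$, use the two incommensurable frequencies to force $z_1=\tilde z_1$, then recover $n_1$ and $n_1'$; in particular your number-theoretic step (the integers $\lambda_1,\lambda_2$ and the contradiction with $\frac{\omega_1}{\omega_2}\notin\Q$) is verbatim the paper's. The two outer steps, however, are carried out by different means. The paper stays purely algebraic: it cross-multiplies \eqref{eq_joint_uniqRecLayer0_meas} into a polynomial identity of degree two in $\delta$ and compares coefficients --- the $\delta^2$ coefficient yields $n_1'(\omega)\tilde n_1'(\omega)\bigl(\e^{-2\i\frac\omega c z_1}-\e^{-2\i\frac\omega c\tilde z_1}\bigr)=0$, hence the phase equality, and the $\delta^0$ and $\delta^1$ coefficients then give $n_1=\tilde n_1$ and $n_1'=\tilde n_1'$. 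You instead extract the phase analytically, by letting $\delta\to\infty$ and using $r(\zeta)\to1$ as $|\zeta|\to\infty$ (valid since $n_1'(\omega)\ne0$ makes $|\bar n_1(\omega,\delta)|\to\infty$), and you recover the indices from the injectivity of the M\"obius map $r$ away from its pole, together with the fact that two affine functions of $\delta$ agreeing off a finite set agree identically. Both routes are sound; yours is arguably more conceptual (the phase is the strong-compression limit of the data, and the index is determined because the Fresnel coefficient is injective), and it handles the possible pole at $\bar n_1=-1$ explicitly, which the paper's cross-multiplication absorbs silently. What the paper's coefficient comparison buys is economy of data: a quadratic polynomial identity is already forced by agreement at three distinct values of $\delta$, so finitely many compression states suffice, whereas your limit argument genuinely needs arbitrarily large compressions --- immaterial under the stated hypothesis ($\delta\in\R$), but a real distinction if one wanted to weaken it.
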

\begin{proof}
Expanding the fractions in \eqref{eq_joint_uniqRecLayer0_meas}, the equation reduces to the zeroes of a quadratic polynomial in $\delta$. Comparing the coefficients of second order of $\delta$, we find that
\[ n_1'(\omega)\tilde n_1'(\omega)\left(e^{-2\i\tfrac{\omega}cz_1}-e^{-2\i \tfrac{\omega}c\tilde z_1}\right) = 0. \]
Thus, we get
\[ e^{-2\i\frac\omega cz_1}=e^{-2\i\frac\omega c\tilde z_1}\text{ for all }\omega\in I. \]
Evaluating this at $\omega_1$ and $\omega_2$, we have that there exist two integers $\lambda_1,\lambda_2\in\Z$ with
\[ z_1-\tilde z_1=\frac{\pi c}{\omega_1}\lambda_1=\frac{\pi c}{\omega_2}\lambda_2. \]
If $\lambda_2\ne0$, then we would get the contradiction $\frac{\lambda_1}{\lambda_2}=\frac{\omega_1}{\omega_2}\in\R\setminus\Q$. Therefore, $\lambda_2=0,$ which means that $z_1=\tilde z_1$.

With this, \eqref{eq_joint_uniqRecLayer0_meas} evaluated at $\delta=0$ simplifies to
\[ n_1(\omega)=\tilde n_1(\omega)\text{ for all }\omega\in I. \]

Finally, looking at the terms of first order in $\delta$ in the expanded version of \eqref{eq_joint_uniqRecLayer0_meas}, we find that they have been reduced to give the equation
\[ n_1'(\omega)=\tilde n_1'(\omega). \]
\end{proof}

After having recovered the parameters up to the $j$-th layer, $j\in\{1,\ldots,J\}$, we can clean our measurement data from all effects caused by the previous layers and consider the next subproblem, namely the signal originating from the region of the randomly distributed particles. Here, the unknown parameters consist of
\begin{itemize}
\item
the radius $R$ of the particles, which we will assume to be so small that the approximation $R=0$ is reasonable and that the particles can also after compression be considered to have a round shape;
\item
the ratio $\rho_j>0$ of particles per surface area, which we assume to be invariant under the compression;
\item
the refractive index $\bar\nu_j$ of the particles, which we assume to deform linearly according to
\[ \bar\nu_j(\omega,\delta) = \nu_j(\omega)+\delta\nu_j'(\omega),\text{ where }\nu_j(\omega)=\sqrt{1+\check\chi^{(\mathrm p)}_j(\omega)}, \]
under compression; and
\item
the vertical positions $\bar\zeta_j$ and $\bar Z_j$ of the beginning and the end of the random medium inside the $j$-th layer, which are also assumed to change linearly according to
\[ \bar\zeta_j(\delta) = \zeta_j+\delta\zeta_j'\text{ and }\bar Z_j(\delta) = Z_j+\delta Z_j'. \]
\end{itemize}
We collect these unknowns in the tuple $S_j=(\rho_j,\nu_j,\nu_j',\zeta_j,\zeta_j',Z_j,Z_j')$.
The (corrected) incident wave $E^{(0)}$ and the refractive index $n_j$ and its rate $n_j'$ of change under compression are presumed to be already calculated.

From the measurements of the electric field for this subproblem, provided that it can be well approximated by the expected value of the Born approximation as calculated in \autoref{Scat_Random_th}, we can extract the data (rewriting the expression \eqref{Scat_phi_eq} for $\phi_j$ in \eqref{Scat_Random_Refl_eq} in terms of the refractive indices)
\[
M_j[S_j](\omega,\delta) = \rho_j(\bar \nu_j^2(\omega,\delta)-\bar n_j^2(\omega,\delta)) \left(\e^{-\i\frac\omega{2c}\bar n_j(\omega,\delta)(\bar\zeta_j(\delta)+3\bar Z_j(\delta))}-\e^{-\i\frac\omega{2c}\bar n_j(\omega,\delta)(3\bar\zeta_j(\delta)+\bar Z_j(\delta))}\right),
\]

\begin{lemma}\label{OCE_Random_th}
Let $j\in\{1,\ldots,J\}$ be fixed, $I\subset\R$ be an arbitrary subset and $n_j$, $n_j'$ be given such that $n_j(\omega)\ne0$ for every $\omega\in I$ and that there exists a value $\omega_0\in I\setminus\{0\}$ with $\Im(n_j'(\omega_0))>0$. Assume that we have $S_j=(\rho_j,\nu_j,\nu_j',\zeta_j,\zeta_j',Z_j,Z_j')$ and $\tilde S_j=(\tilde\rho_j,\tilde \nu_j,\tilde \nu_j',\tilde \zeta_j,\tilde \zeta_j',\tilde Z_j,\tilde Z_j')$ with
\begin{equation}\label{OCE_Random_Meas_eq}
M_j[S_j](\omega,\delta) = M_j[\tilde S_j](\omega,\delta)\text{ for all }\omega\in I,\;\delta\in\R. 
\end{equation}
Additionally, we enforce the ordering $Z_j<\zeta_j$ and $\tilde Z_j<\tilde\zeta_j$ about the beginning and the end of the random layer and make the assumptions $Z_j'>\zeta_j'>0$ and $\tilde Z_j'>\tilde\zeta_j'>0$ that the layer shrinks when being compressed.

Moreover, we assume the existence of an element $\omega_1\in I$ so that
\begin{equation}\label{OCE_Random_Assum_eq}
\frac{n_j'(\omega_1)}{n_j(\omega_1)}\ne\frac{\nu_j'(\omega_1)}{\nu_j(\omega_1)}.
\end{equation}

Then, we have
\[ S_j=\tilde S_j. \]
\end{lemma}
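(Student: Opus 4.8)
The plan is to read off, for each fixed $\omega\in I$, the complete dependence of $M_j[S_j](\omega,\cdot)$ on the compression parameter $\delta$, using that this map extends to an \emph{entire} function of $\delta\in\C$ (a polynomial amplitude times exponentials of polynomials), so that \eqref{OCE_Random_Meas_eq} holds for all complex $\delta$. First I would rewrite the bracket in $M_j$ as a genuine difference of two exponentials, $M_j[S_j](\omega,\delta)=\rho_j\,A(\omega,\delta)\,(\e^{q_1(\delta)}-\e^{q_2(\delta)})$, with the polynomial amplitude $A=\bar\nu_j^2-\bar n_j^2$ and the two \emph{quadratic} exponents $q_1(\delta)=-\i\tfrac{\omega}{2c}\bar n_j(\bar\zeta_j+3\bar Z_j)$ and $q_2(\delta)=-\i\tfrac{\omega}{2c}\bar n_j(3\bar\zeta_j+\bar Z_j)$. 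Since $n_j(\omega)\neq0$ and $\nu_j$, $n_j$ are known, only the positions and $\rho_j,\nu_j,\nu_j'$ remain unknown, and everything is now encoded in the pairing of exponents and in the coefficients of the degree-two polynomial $A$.

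Next I would \emph{separate the two exponential contributions}. Because $\bar n_j(\omega,\cdot)\not\equiv0$ (as $n_j(\omega)\neq0$) and $\zeta_j>Z_j$, one has $q_1\neq q_2$ and $\tilde q_1\neq\tilde q_2$ as polynomials for $\omega\neq0$; and the ``swapped'' matching $q_1=\tilde q_2,\ q_2=\tilde q_1$ is impossible, since subtracting the two resulting identities and comparing constant terms forces $Z_j-\zeta_j=\tilde\zeta_j-\tilde Z_j$, whose left side is negative and right side positive under the ordering $Z_j<\zeta_j$, $\tilde Z_j<\tilde\zeta_j$. Invoking the linear independence of exponentials of distinct polynomials — equivalently, at the frequency $\omega_0$ with $\Im n_j'(\omega_0)>0$, singling out the dominant exponential as $\delta\to+\infty$, whose dominance is controlled by the shrinking condition $Z_j'>\zeta_j'>0$ — I obtain, at every $\omega$ for which $A\not\equiv0$, the clean matching $q_1=\tilde q_1$, $q_2=\tilde q_2$ together with $\rho_j A=\tilde\rho_j\tilde A$. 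Cancelling the nonzero polynomial $\bar n_j$ reduces $q_1=\tilde q_1$, $q_2=\tilde q_2$ to equalities of the affine polynomials $\bar\zeta_j+3\bar Z_j$, $3\bar\zeta_j+\bar Z_j$ with their tilded counterparts; reading off constant and linear coefficients gives $\zeta_j+3Z_j=\tilde\zeta_j+3\tilde Z_j$, $3\zeta_j+Z_j=3\tilde\zeta_j+\tilde Z_j$ and the same for the primed quantities, whose $2\times2$ solution is $\zeta_j=\tilde\zeta_j$, $Z_j=\tilde Z_j$, $\zeta_j'=\tilde\zeta_j'$, $Z_j'=\tilde Z_j'$.

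For the amplitude I would then use $\rho_j A=\tilde\rho_j\tilde A$ coefficientwise in $\delta$, which yields $\rho_j(\nu_j^2-n_j^2)=\tilde\rho_j(\tilde\nu_j^2-n_j^2)$, $\rho_j(\nu_j\nu_j'-n_jn_j')=\tilde\rho_j(\tilde\nu_j\tilde\nu_j'-n_jn_j')$, and $\rho_j((\nu_j')^2-(n_j')^2)=\tilde\rho_j((\tilde\nu_j')^2-(n_j')^2)$. Setting $r=\tilde\rho_j/\rho_j$ and inserting the first and third relation into $(\nu_j\nu_j')^2=\nu_j^2(\nu_j')^2$ collapses these, when $r\neq1$, to $(\tilde\nu_j n_j'-n_j\tilde\nu_j')^2=0$, hence $\nu_j'/\nu_j=n_j'/n_j$ by the symmetric computation. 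Evaluating at $\omega_1$ this contradicts \eqref{OCE_Random_Assum_eq}, so $r=1$, i.e.\ $\rho_j=\tilde\rho_j$. The three relations then give $\nu_j^2=\tilde\nu_j^2$, $(\nu_j')^2=(\tilde\nu_j')^2$, and $\nu_j\nu_j'=\tilde\nu_j\tilde\nu_j'$, the last forcing equal signs; since the refractive indices are taken in the upper half plane by \eqref{Scat_Refractive_eq}, the sign is fixed to $+$, so $\nu_j=\tilde\nu_j$ and $\nu_j'=\tilde\nu_j'$. At the frequencies where $A\equiv0$ the signal vanishes identically and the same branch convention forces $\nu_j=n_j=\tilde\nu_j$, $\nu_j'=n_j'=\tilde\nu_j'$; hence equality holds on all of $I$, completing $S_j=\tilde S_j$.

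The main obstacle is precisely the separation step: showing that equality of the two two-term exponential sums forces the term-by-term matching of exponents rather than a swap or a partial coincidence of the four polynomials $q_1,q_2,\tilde q_1,\tilde q_2$. This is where the structural hypotheses are indispensable — the ordering $Z_j<\zeta_j$ eliminates the swap, the growth condition $\Im n_j'(\omega_0)>0$ with $Z_j'>\zeta_j'>0$ provides the dominant-exponential mechanism (and a usable nonzero frequency) that makes the matching rigorous, and the non-proportionality \eqref{OCE_Random_Assum_eq} is exactly what decouples the unknown density $\rho_j$ from the unknown particle index $\nu_j$ in the amplitude.
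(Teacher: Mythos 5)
Your proof is correct, and it rests on the same two-stage mechanism as the paper's own proof: first force the $\delta$-exponents to match pairwise (which yields $\zeta_j=\tilde\zeta_j$, $Z_j=\tilde Z_j$, $\zeta_j'=\tilde\zeta_j'$, $Z_j'=\tilde Z_j'$), then compare the quadratic-in-$\delta$ polynomial amplitudes coefficient by coefficient to recover $\rho_j,\nu_j,\nu_j'$, invoking \eqref{OCE_Random_Assum_eq} at $\omega_1$. The differences are worth recording. Where the paper simply decrees that ``the different orders of decay in $\delta$ in the exponents must all match'', you actually prove the separation: you exclude the swapped pairing $q_1=\tilde q_2$, $q_2=\tilde q_1$ using the ordering $Z_j<\zeta_j$, $\tilde Z_j<\tilde\zeta_j$, and you force the direct pairing by linear independence of exponentials, equivalently by the dominant-balance argument at $\omega_0$ where $\Im(n_j'(\omega_0))>0$ and $Z_j'>\zeta_j'>0$ order the growth rates; you also dispose of the degenerate frequencies at which the amplitude $A(\omega,\cdot)\equiv0$ and the data is identically zero, a case the paper passes over in silence. (One pedantic point: $\e^{q}$ and $\e^{\tilde q}$ are linearly independent over polynomials only when $q-\tilde q$ is non-constant, so the clean matching a priori holds only modulo additive constants; such a constant offset is then excluded by your dominant-balance mechanism, since growth rates are insensitive to constants and already pin down all four position parameters via the primed ordering. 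Both your argument and the paper's tacitly assume the signal is nontrivial at the frequency used for this step.) In the amplitude step your elimination is genuinely different from the paper's: rather than eliminating $\rho_j,\tilde\rho_j$ to reach $(\tilde\nu_j^2-n_j^2)(\nu_j^2-\tilde\nu_j^2)(\nu_j'n_j-\nu_jn_j')^2=0$ and running a two-case analysis, you set $r=\tilde\rho_j/\rho_j$ and substitute the three coefficient relations into the identity $(\nu_j\nu_j')^2=\nu_j^2(\nu_j')^2$, collapsing them to $r(1-r)\left(\tilde\nu_jn_j'-n_j\tilde\nu_j'\right)^2=0$ and, symmetrically, $r^{-1}(1-r^{-1})\left(\nu_jn_j'-n_j\nu_j'\right)^2=0$, so that \eqref{OCE_Random_Assum_eq} at $\omega_1$ forces $r=1$ outright. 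This buys you $\rho_j=\tilde\rho_j$ without any case distinction, after which both proofs finish identically, using the upper-half-plane branch convention of \eqref{Scat_Refractive_eq} to fix the signs of $\nu_j$ and $\nu_j'$.
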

\begin{proof}
Considering the different orders of decay in $\delta$ in the exponents in \eqref{OCE_Random_Meas_eq}, we require that all of them match, which yields the equation system
\begin{align*}
\delta^2\frac\omega{2c}\Im(n_j'(\omega))(\zeta_j'+3Z_j') &= \delta^2\frac\omega c\Im(n_j'(\omega))(\tilde\zeta_j'+3\tilde Z_j')\text{ and} \\
\delta^2\frac\omega{2c}\Im(n_j'(\omega))(3\zeta_j'+Z_j') &= \delta^2\frac\omega c\Im(n_j'(\omega))(3\tilde\zeta_j'+\tilde Z_j')
\end{align*}
for the exponents quadratic in $\delta$, which implies $\zeta_j'=\tilde\zeta_j'$ and $Z_j'=\tilde Z_j'$, and, using this result, the equation system
\begin{align*}
\delta\frac\omega{2c}\Im(n_j'(\omega))(3\zeta_j+Z_j) &= \delta\frac\omega{2c}\Im(n_j'(\omega))(3\tilde\zeta_j+\tilde Z_j)\text{ and} \\
\delta\frac\omega{2c}\Im(n_j'(\omega))(\zeta_j+3Z_j) &= \delta\frac\omega{2c}\Im(n_j'(\omega))(\tilde\zeta_j+3\tilde Z_j)
\end{align*}
for the exponents linear in $\delta$, which further implies $\zeta_j=\tilde\zeta_j$ and $Z_j=\tilde Z_j$.

At this point, \eqref{OCE_Random_Meas_eq} is reduced to
\[ \rho_j\left((\nu_j+\delta\nu_j')^2-(n_j+\delta n_j')^2\right) = \tilde\rho_j\left((\tilde\nu_j+\delta\tilde\nu_j')^2-(n_j+\delta n_j')^2\right). \]
Comparing coefficients with respect to $\delta$ gives us the equation system
\begin{align}
\rho_j\left(\nu_j'{}^2-n_j'{}^2\right) &= \tilde\rho_j\left(\tilde\nu_j'{}^2-n_j'{}^2\right),\label{OCE_Random_first_eq} \\
\rho_j\left(\nu_j\nu_j'-n_jn_j'\right) &= \tilde\rho_j\left(\tilde\nu_j\tilde\nu_j'-n_j\tilde n_j'\right),\label{OCE_Random_second_eq} \\
\rho_j\left(\nu_j^2-n_j^2\right) &= \tilde\rho_j\left(\tilde\nu_j^2-n_j^2\right).\label{OCE_Random_third_eq}
\end{align}

We use equation \eqref{OCE_Random_third_eq} in \eqref{OCE_Random_first_eq} and \eqref{OCE_Random_second_eq} to eliminate of the variables $\rho_j$ and $\tilde\rho_j$, and interpret the result as an equation system for the variables $\tilde\nu_j$ and $\tilde\nu_j'$. Solving these equations then for $\tilde\nu_j'$, gives us
\begin{align*}
(\nu_j^2-n_j^2)\tilde\nu_j'{}^2 &= (\tilde\nu_j^2-n_j^2)\nu_j'{}^2+(\nu_j^2-\tilde\nu_j^2)n_j'{}^2, \\
(\nu_j^2-n_j^2)\tilde\nu_j\tilde\nu_j' &= (\tilde\nu_j^2-n_j^2)\nu_j\nu_j'+(\nu_j^2-\tilde\nu_j^2)n_jn_j'.
\end{align*}
Eliminating further $\tilde\nu_j'$ by multiplying the first equation with $\tilde\nu_j$ and subtracting the squared second equation, we find after some algebraic manipulations
\[ (\tilde\nu_j^2-n_j^2)(\nu_j^2-\tilde\nu_j^2)(\nu_j'n_j-\nu_jn_j')^2 = 0. \]

Evaluating this at the value $\omega_1$, we see that the last factor is by assumption \eqref{OCE_Random_Assum_eq} not zero. Thus, there are only two cases.
\begin{enumerate}
\item
Either we have $\tilde\nu_j(\omega_1)=\nu_j(\omega_1)\ne n_j(\omega_1)$ and therefore by \eqref{OCE_Random_third_eq} that $\tilde\rho_j=\rho_j$; then we get with \eqref{OCE_Random_third_eq} and \eqref{OCE_Random_first_eq} that $\tilde\nu_j=\nu_j$ and $\tilde\nu_j'=\nu'_j$ holds on the whole set $I$, which means that we have shown $\tilde S_j=S_j$.
\item
Or we have that $\tilde\nu_j(\omega_1)=n_j(\omega_1)$. Then, \eqref{OCE_Random_third_eq} tells us that also $\nu_j(\omega_1)=n_j(\omega_1)$ and thus, by combining \eqref{OCE_Random_first_eq} and \eqref{OCE_Random_second_eq}, that $\tilde\nu_j'(\omega_1)=\nu'_j(\omega_1)$. Furthermore, we know from assumption \eqref{OCE_Random_Assum_eq} that in this case $\nu'_j(\omega_1)\ne n_j'(\omega_1)$ and therefore \eqref{OCE_Random_first_eq} implies $\tilde\rho_j=\rho_j$ from which we again conclude that $\tilde S_j=S_j$.
\end{enumerate}
\end{proof}

As last type of subproblem, we encounter then the interface between the layer $j$ and the layer $j+1$. Similarly to the case of the initial layer, we obtain here from \autoref{Scat_Layered_th} the data
\begin{equation*}
m_j[n_{j+1},n_{j+1}',z_{j+1},z_{j+1}'](\omega,\delta) = \frac{\bar n_{j+1}(\omega, \delta) -\bar n_j(\omega, \delta)}{\bar n_{j+1}(\omega, \delta) +\bar n_j(\omega, \delta)}\e^{-2\i\frac\omega c\bar n_j(\omega,\delta)\bar z_{j+1}(\delta)}.
\end{equation*}

Again, this data allows us to uniquely obtain the variables $n_{j+1}$, $n_{j+1}'$, $z_{j+1}$, and $z_{j+1}'$ from the already reconstructed values $n_j$ and $n_j'$.
\begin{lemma}\label{th_joint_uniqRecLayer}
Let $j\in\{1,\ldots,J-1\}$ be fixed, $I\subset\R$ be an arbitrary subset and $n_j$, $n_j'$ be given such that $n_j(\omega)\ne0$ for every $\omega\in I$ and that there exists a value $\omega_0\in I\setminus\{0\}$ with $\Im(n_j'(\omega_0))>0$. Assume that we have $(n_{j+1},n_{j+1}',z_{j+1},z_{j+1}')$ and $(\tilde n_{j+1},\tilde n_{j+1}',\tilde z_{j+1},\tilde z_{j+1}')$ with
\begin{equation}\label{eq_joint_uniqRecLayer_meas}
m_j[n_{j+1},n_{j+1}',z_{j+1},z_{j+1}'](\omega,\delta) = m_j[\tilde n_{j+1},\tilde n_{j+1}',\tilde z_{j+1},\tilde z_{j+1}'](\omega,\delta)
\end{equation}
for all $\omega\in I$ and $\delta\in\R$.

Then, we have
\[ n_{j+1}(\omega)=\tilde n_{j+1}(\omega),\;n_{j+1}'(\omega)=\tilde n_{j+1}'(\omega),\;z_{j+1}=\tilde z_{j+1},\text{ and }z_{j+1}'=\tilde z_{j+1}' \]
for all $\omega\in I$.
\end{lemma}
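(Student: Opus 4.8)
The plan is to mirror the strategy of \autoref{OCE_Random_th} (and the neighbouring \autoref{th_joint_uniqRecLayer0}): first exploit the growth of the phase factor in the compression parameter $\delta$ to pin down the geometric quantities $z_{j+1}$ and $z_{j+1}'$, and then reduce the remaining identity to an equality of reflection coefficients from which the refractive indices follow by elementary algebra. Concretely, I would insert the parametrisations $\bar n_{j+1}=n_{j+1}+\delta n_{j+1}'$, $\bar n_j=n_j+\delta n_j'$, and $\bar z_{j+1}=z_{j+1}+\delta z_{j+1}'$ into \eqref{eq_joint_uniqRecLayer_meas} and rewrite it, for a fixed $\omega\in I$, as
\[
\frac{A-B}{A+B}\,\e^{-2\i\frac\omega cB\bar z_{j+1}} = \frac{\tilde A-B}{\tilde A+B}\,\e^{-2\i\frac\omega cB\bar{\tilde z}_{j+1}},
\]
where $A=\bar n_{j+1}$, $\tilde A=\bar{\tilde n}_{j+1}$, and $B=\bar n_j$ is \emph{the same} on both sides because $n_j,n_j'$ are already reconstructed. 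The key structural observation is that $B\bar z_{j+1}$ is a quadratic polynomial in $\delta$ with leading coefficient $n_j'z_{j+1}'$, whereas the reflection factors $\frac{A\mp B}{A\pm B}$ are rational in $\delta$.

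Taking absolute values and using $|\e^{-2\i\frac\omega cB\bar z_{j+1}}|=\e^{2\frac\omega c\Im(B\bar z_{j+1})}$, I would compare the behaviour of both sides as $\delta\to\pm\infty$ along $\omega=\omega_0$. Since the quotient of the two rational reflection factors grows at most polynomially in $\delta$, the surviving factor $\e^{2\frac\omega c\Im(B(\bar z_{j+1}-\bar{\tilde z}_{j+1}))}$ must also be of at most polynomial growth, which forces the imaginary parts of the $\delta^2$- and $\delta$-coefficients of $B(\bar z_{j+1}-\bar{\tilde z}_{j+1})$ to vanish. The $\delta^2$-coefficient equals $n_j'(z_{j+1}'-\tilde z_{j+1}')$, whose imaginary part at $\omega_0$ is $(z_{j+1}'-\tilde z_{j+1}')\Im(n_j'(\omega_0))$; as $\Im(n_j'(\omega_0))>0$ and $z_{j+1}',\tilde z_{j+1}'\in\R$, this gives $z_{j+1}'=\tilde z_{j+1}'$. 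Substituting this back, the $\delta$-coefficient reduces to $n_j'(z_{j+1}-\tilde z_{j+1})$, and the identical argument yields $z_{j+1}=\tilde z_{j+1}$; both conclusions are global because the positions are frequency-independent.

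With the positions equal the phase factors cancel, leaving $\frac{A-B}{A+B}=\frac{\tilde A-B}{\tilde A+B}$ for every $\delta$. Cross-multiplying collapses this to $2B(A-\tilde A)=0$, that is, $\bar n_j\bigl(\bar n_{j+1}-\bar{\tilde n}_{j+1}\bigr)=0$ for all $\delta\in\R$. Since this is a polynomial identity in $\delta$ and $\bar n_j=n_j+\delta n_j'\not\equiv0$ (using $n_j(\omega)\ne0$), the linear factor $\bar n_{j+1}-\bar{\tilde n}_{j+1}=(n_{j+1}-\tilde n_{j+1})+\delta(n_{j+1}'-\tilde n_{j+1}')$ must vanish identically, hence $n_{j+1}=\tilde n_{j+1}$ and $n_{j+1}'=\tilde n_{j+1}'$ on $I$.

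I expect the main obstacle to be the growth argument of the second paragraph: one must argue carefully that a nonzero imaginary part in the $\delta^2$- or $\delta$-coefficient of the phase produces genuine exponential growth or decay of the modulus (in $\delta^2$, respectively $\delta$) that cannot be balanced by the merely polynomial behaviour of the rational reflection factors, and that evaluating at a single frequency $\omega_0$ with $\Im(n_j'(\omega_0))>0$ is enough precisely because $z_{j+1}$ and $z_{j+1}'$ do not depend on $\omega$. The closing algebra — the cancellation to $2B(A-\tilde A)=0$ and the polynomial-in-$\delta$ argument — is routine.
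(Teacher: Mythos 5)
Your proof is correct and takes essentially the same approach as the paper's: first use the exponential growth/decay in $\delta$ of the phase factor (exploiting $\Im(n_j'(\omega_0))>0$ and the $\omega$-independence of the positions) to conclude $z_{j+1}'=\tilde z_{j+1}'$ and $z_{j+1}=\tilde z_{j+1}$, then reduce the remaining identity of reflection coefficients to $\bar n_j(\bar n_{j+1}-\bar{\tilde n}_{j+1})=0$ to recover the indices. The only cosmetic difference is in the last step, where the paper evaluates at $\delta=0$ and then compares first-order terms in $\delta$, while you treat the cross-multiplied identity as a polynomial identity in $\delta$ and factor it; both are routine and equivalent.
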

\begin{proof}
Comparing again the different orders of decay in $\delta$ in the exponents in \eqref{eq_joint_uniqRecLayer_meas}, we require that the coefficients on both sides coincide:
\begin{align*}
2\delta^2\frac\omega c\Im(n_j'(\omega))(z_{j+1}'-\tilde z_{j+1}') &= 0 \text{ and } \\
4\delta\frac\omega c\left(\Im(n_j(\omega))(z_{j+1}'-\tilde z_{j+1}')+\Im(n_j'(\omega))(z_{j+1}-\tilde z_{j+1})\right) &= 0.
\end{align*}
Because of the assumption that $\Im(n_j'(\omega_0))>0$, this is equivalent to
\[ z_{j+1}'=\tilde z_{j+1}' \text{ and }  z_{j+1}=\tilde z_{j+1}. \]

As in the proof of \autoref{th_joint_uniqRecLayer0}, equation \eqref{eq_joint_uniqRecLayer_meas} for $\delta=0$ then gives us  
\[ 2n_j(\omega)(n_{j+1}(\omega)-\tilde n_{j+1}(\omega))=0, \]
resulting in $n_{j+1}(\omega)=\tilde n_{j+1}(\omega)$.

Finally, dividing both sides of \eqref{eq_joint_uniqRecLayer_meas} by the exponential factors (which we already know to be the same), we get a quadratic equation for $\delta$ and equating the first order terms in $\delta$, we obtain
\[ 2n_j(\omega)(n_{j+1}'(\omega)-\tilde n_{j+1}'(\omega))=0, \]
which yields $n_{j+1}'(\omega)=\tilde n_{j+1}'(\omega)$.
\end{proof}

\section{Conclusions}

We have thus shown that by analysing a layered medium endued with independently uniformly distributed scatterers in each layer with optical coherence tomography, we can reduce the inverse problem of reconstructing the electric susceptibility of the medium to subproblems for each layer separately by a layer stripping argument, provided the homogeneous parts between the different regions are not too small.

Then by combining this imaging method with an elastography setup by recording measurements for different compression states (normal to the layered structure), we find out that this allows for the reconstruction of the optical parameters and leads to a unique reconstructability of all the optical parameters: the electric susceptibilities and positions of the layers, the electric susceptibilities of the randomly distributed particles, their density, and the locations of the regions of these particles (at every compression state). Of course, the recovered shifts of the layer boundaries for the different compression states could then be used in a next step to determine elastic parameters of the medium.

\section*{Acknowledgements}

This work was made possible by the greatly appreciated support of the Austrian Science Fund (FWF) via the special research programme SFB F68 ``Tomography Across the Scales'':
Peter Elbau and Leopold Veselka have been supported via the subproject F6804-N36 ``Quantitative Coupled Physics Imaging'', and Leonidas Mindrinos acknowledges support from the subproject F6801-N36.

This is a pre-print of a contribution published in its final authenticated version as \cite{ElbMinVes20}.

\section*{References}
\printbibliography[heading=none]

\end{document}